\let\orgdescriptionlabel\descriptionlabel
\renewcommand*{\descriptionlabel}[1]{%
  \let\orglabel\label
  \let\label\@gobble
  \phantomsection
  \edef\@currentlabel{#1\unskip}%
  \let\label\orglabel
  \orgdescriptionlabel{#1}%
}
\newtheorem{theorem}{Theorem}[section] 
\newtheorem{lemma}[theorem]{Lemma} 
\newtheorem{corollary}[theorem]{Corollary}
\newtheorem*{theorem*}{Theorem}%
\theoremstyle{remark}
\newtheorem{remark}[theorem]{Remark}
\newtheorem{example}[theorem]{Example}
\theoremstyle{definition}
\newtheorem{definition}[theorem]{Definition}
\newcommand{\defn}[1]{\emph{\color{blue} #1}} %
\newcommand{\floor}[1]{\left\lfloor {#1} \right\rfloor}
\newcommand{\ffloor}[2]{\left\lfloor{\frac{#1}{#2}}\right\rfloor} %
\newcommand{\set}[2]{\ensuremath{\left\{#1\,\middle|\,#2\right\}}}
\newcommand{\wh}{\widehat}
\newcommand{\wwh}[1]{%
\begingroup%
  \let\macc@kerna\z@%
  \let\macc@kernb\z@%
  \let\macc@nucleus\@empty%
  \widehat{\raisebox{.35ex}{\vphantom{\ensuremath{#1}}}\smash{\widehat{#1}}\,}
  \hskip -1.5pt%
\endgroup%
}
\def\RR{\mathbb{R}}
\def\cT{\mathcal{T}}
\def\cC{\mathcal{C}}
\def\cP{\mathcal{P}}
\newcommand{\p}[1][p]{#1} %
\newcommand{\pc}[1][P]{#1} %
\newcommand{\tr}[1][T]{#1} %
\newcommand{\regtriang}[1]{\cT(#1)} %
\newcommand{\conv}{\operatorname{conv}}
\newcommand\restr[2]{{%
  \left.\kern-\nulldelimiterspace %
  #1 %
  \vphantom{\big|} %
  \right|_{#2} %
  }}%
\newcommand{\ball}[2]{B(#1, #2)}
\newcommand{\size}[1]{c(#1)}
\title{Many regular triangulations and many polytopes}
\date{\today}
\thanks{Work of Padrol and Philippe is supported by grants ANR-17-CE40-0018 and
ANR-21-CE48-0020 of the French National Research Agency ANR (projects CAPPS and PAGCAP). Work of Padrol and Santos is supported by grant PID2019-106188GB-I00 of MCIN/AEI/10.13039/501100011033 and project CLaPPo (21.SI03.64658) of Universidad de Cantabria and Banco Santander.}
\author{Arnau Padrol}
\address[Arnau Padrol]
{Departament de Matem\`atiques i Inform\`atica, Universitat de Barcelona, Barcelona, Spain.}
\email{arnau.padrol@ub.edu}
\urladdr{\url{https://www.ub.edu/comb/arnaupadrol}}
\author{Eva Philippe}
\address[Eva Philippe]{Sorbonne Université and Université de Paris, CNRS, IMJ-PRG, F-75005 Paris, France.}
\email{eva.philippe@imj-prg.fr}
\urladdr{\url{https://perso.imj-prg.fr/eva-philippe}}
\author{Francisco Santos}
\address[Francisco Santos]{Dept. de Matem\'aticas, Estad\'istica y Computaci\'on, Universidad de Cantabria, Santander, Spain}
\email{francisco.santos@unican.es}
\urladdr{\url{https://personales.unican.es/santosf}}
\begin{document}

\maketitle

\begin{abstract}
We show that for fixed $d>3$ and $n$ growing to infinity there are at least $(n!)^{d-2 \pm o(1)}$ different labeled combinatorial types of $d$-polytopes with $n$ vertices. This is about the square of the previous best lower bounds. As an intermediate step, we show that certain neighborly polytopes (such as particular realizations of cyclic polytopes) have at least $(n!)^{ \floor{(d-1)/2}  \pm o(1)}$
regular triangulations.
\end{abstract}

\section{Introduction}

A \defn{polytope} is the convex hull of a finite set of points in a real Euclidean space. Its \defn{combinatorial type} is given by its poset of \defn{faces} (subsets of the polytope maximized by linear functionals, ordered by inclusion). 
In the preface of his now classical book in polytope theory~\cite{G03}, Gr{\"u}nbaum 
traces the problem of enumerating the number of combinatorial types of polytopes back to Euler, and cites its difficulty as one of the main reasons for the ``\emph{decline in the interest in convex polytopes}'' at the beginning of the XXth century. 

These efforts were concentrated in the case of $3$-dimensional polytopes, starting with many contributions by Cayley and Kirkman, according to Gr{\"u}nbaum's historical remarks in~\cite[Chapter~13.6]{G03}. Thanks to Steinitz's Theorem, which gives a correspondence between combinatorial types of $3$-dimensional polytopes and $3$-connected planar graphs, nowadays we have quite precise knowledge on the number of $3$-polytopes with $n$ vertices~\cite{BenderWormald1988,RichmondWormald1982} and the distribution of many combinatorial parameters~\cite{BenderGaoRichmond1992}. 

In contrast, for higher-dimensional polytopes the problem is still very far from being solved. One of the main difficulties lies in the lack of a combinatorial characterization of face lattices of polytopes. Mn\"ev's Universality Theorem~\cite{Mnev1988} and its extension by Richter-Gebert~\cite{RichterGebert1996}, imply that deciding whether a poset is the face lattice of a $4$-dimensional polytope is computationally hard ($\exists\RR$-complete). It seems thus that a simple combinatorial characterization is impossible, which is one of the intrinsic difficulties of the enumeration problem. The problem remains hard even when restricting to the ``generic'' case of \defn{simplicial} polytopes, where all faces except the whole polytope are simplices (equivalently, polytopes whose combinatorial type does not change when the vertices are perturbed), see~\cite{AdiprasitoPadrol2017}.

However, the mere number of polytopes is  relatively small.
In 1986 Goodman and Pollack~\cite{GP86} showed that the number of (labeled) combinatorially different simplicial $d$-polytopes with $n$ vertices is bounded by $(n!)^{c_d}$ for some constant $c_d$ depending solely on $d$, and Alon~\cite{Alon1986} proved that this upper bound is valid for non-necessarily simplicial polytopes too. This contrasts with the number of combinatorially different simplicial $(d-1)$-spheres with $n$ vertices, which grows at least as $e^{\Omega(n^{\floor{d/2}})}$~\cite{Kalai88,NSS16}.

In 1982 Shemer~\cite{Shemer82} had devised constructions producing about $ (n!)^{\frac12 \pm o(1)}$ different simplicial polytopes. This matches the upper bound, except for the fact that the constant $c_d$ in the upper bound of Goodman and Pollack and Alon is ${ d^2\pm o(1)}$, much bigger than the $1/2$ obtained by Shemer.
The construction was greatly improved by Padrol~\cite{Pad13} (see also~\cite{GP16}) who showed that there are at least $(n!)^{\floor{d/2}\pm o(1)}$ (labeled) neighborly polytopes. There are alternative constructions that give these many different combinatorial types of polytopes, which led Nevo and Padrol to ask whether the number of $d$-dimensional polytopes with $n$ vertices and $m$ facets was bounded above by $m^{n+o(n)}$ (unpublished). As the maximal number of facets of a $d$-polytope with $n$ vertices is $O(n^{\floor{d/2}})$ by the Upper Bound Theorem~\cite{McM70}, this would imply that the bound of $(n!)^{\floor{d/2}\pm o(1)}$ is asymptotically tight. 

The main result in this paper gives a negative answer to this question, by essentially doubling the exponent of  $n!$ in the construction of Padrol:

\begin{theorem}\label{thm:manypolytopes}
The number of different labeled combinatorial types of $d$-polytopes with $n$ vertices for fixed $d>3$ and $n$ growing to infinity is at least $(n!)^{d-2 \pm o(1)}$.
\end{theorem}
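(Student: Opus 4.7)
The plan is to combine two sources of $n!$-growth: the many labeled neighborly $(d-1)$-polytopes produced by Padrol's earlier construction, and the many regular triangulations per neighborly polytope guaranteed by the intermediate result advertised in the abstract. Specifically, the intermediate result applied in dimension $d-1$ yields a particular realization $P_0$ of the cyclic polytope $C_{d-1}(n)$ with at least $(n!)^{\floor{(d-2)/2} \pm o(1)}$ regular triangulations. In parallel, Padrol's method produces $(n!)^{\floor{(d-1)/2} \pm o(1)}$ combinatorially distinct labeled neighborly $(d-1)$-polytopes, which can be arranged as arbitrarily small perturbations of $P_0$. Since the secondary fan is stable under sufficiently small perturbations of the point configuration, each such perturbed $P$ inherits all of the regular triangulations of $P_0$, and therefore also admits at least $(n!)^{\floor{(d-2)/2} \pm o(1)}$ regular triangulations.

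For each pair $(P,T)$ with $P$ in Padrol's family and $T$ a regular triangulation of $P$, I would lift $P$ to $\RR^d$ by choosing heights $h_1, \dots, h_n$ realizing $T$ as the lower envelope of $Q := \conv\{(v_i, h_i) : i = 1, \dots, n\}$. This produces a labeled $d$-polytope with $n$ vertices whose boundary decomposes as the union of the lower envelope (combinatorially equal to $T$) and an upper envelope. Multiplying the two counts gives
\[
(n!)^{\floor{(d-1)/2} + \floor{(d-2)/2} \pm o(1)} = (n!)^{d-2 \pm o(1)},
\]
which matches the claimed exponent for both parities of $d$.

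The crux of the argument is showing that distinct pairs $(P,T)$ produce combinatorially distinct labeled $d$-polytopes $Q$. The lower envelope of $Q$ is a distinguished subcomplex of its boundary that should be recoverable from the labeled face lattice---for instance by using the labeling to single out a ``downward'' direction in the oriented matroid of $Q$, or by designing the lift so that a specific labeled vertex sits at a combinatorially canonical location (e.g.\ a unique vertex of the lower envelope of highest degree). Once the lower envelope is identified, it determines $T$, and from $T$ together with the labels one recovers the underlying $(d-1)$-polytope $P$ as its carrier, establishing injectivity of $(P,T) \mapsto Q$. The main obstacle I anticipate is precisely this distinctness step, which requires arranging the construction so that the decomposition into lower and upper envelopes is canonically encoded in the combinatorial type of $Q$; a secondary obstacle is tuning Padrol's perturbations to be simultaneously small enough to preserve all regular triangulations of $P_0$ and large enough to realize all $(n!)^{\floor{(d-1)/2} \pm o(1)}$ distinct combinatorial types among neighborly polytopes.
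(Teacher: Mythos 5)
Your overall counting scheme---multiply the number of distinct labeled neighborly $(d-1)$-polytopes from Padrol's construction by the number of regular triangulations each admits, landing on the exponent $\floor{(d-1)/2}+\floor{(d-2)/2}=d-2$---is indeed the structure of the paper's proof. However, two of the steps you flag as concerns are genuine gaps, and one of them is a fatal flaw in the approach as written.

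\textbf{Gap 1: perturbation does not transfer combinatorial types.} You propose to realize all $(n!)^{\floor{(d-1)/2}\pm o(1)}$ of Padrol's combinatorially distinct neighborly $(d-1)$-polytopes as arbitrarily small perturbations of a fixed cyclic realization $P_0$, so that they ``inherit'' its regular triangulations. This cannot work: if $P_0$ is in general position, every sufficiently small perturbation of $P_0$ has the \emph{same} oriented matroid and hence the same labeled combinatorial type as $P_0$. Combinatorially different simplicial polytopes lie in different connected components of the realization space and cannot simultaneously be arbitrarily close to any one configuration. Your second ``obstacle'' (tuning the perturbations to be ``small enough'' and ``large enough'') is not a tuning problem but a contradiction. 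The paper avoids this entirely: it observes that Padrol's construction realizes each of its polytopes by a sequence of positive lexicographic liftings, and that in the final lifting step one may slide the newly added points close enough to the apex $\wh q$ (using Lemma~\ref{lem:triang_insep}) so that the hypotheses of Theorem~\ref{thm:induction_regtriang} hold for \emph{that particular realization}: the new points are triangulation-inseparable from $\wh q$, and the contraction $\wh P/\wh q$ is the original $\floor{(d-1)/2}$-neighborly configuration. Thus every Padrol polytope, not just the cyclic one, has a realization with $(n!)^{\floor{(d-1)/2}\pm o(1)}$ regular triangulations. The lower bound is proved configuration by configuration, not transferred by perturbation.

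\textbf{Gap 2: the lower envelope is not canonically recoverable.} You correctly identify the injectivity of $(P,T)\mapsto Q$ as the crux, but your proposed repair---recovering the lower envelope from the labeled face lattice of $Q$ via some canonical ``downward'' direction or a specially placed vertex---is not available in general. For an arbitrary lift there is no combinatorial invariant of $Q$ that singles out a lower envelope. The paper sidesteps this with Lemma~\ref{lem:triangs_to_polytopes}: instead of a generic lift, one passes from a regular triangulation (a polytopal $(d-1)$-ball on $n$ vertices) to the simplicial $d$-polytope on $n+1$ vertices obtained by adding a point ``at infinity.'' The extra labeled vertex is distinguished, its antistar recovers the ball $T$, and the boundary complex of $P$ is recoverable from $T$ (since $T$ triangulates $\conv(P)$). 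This makes injectivity automatic once the underlying polytopes $P_i$ are pairwise combinatorially distinct. Note also that this bookkeeping shifts the vertex count by one ($n$ points give an $(n+1)$-vertex polytope one dimension higher), which your formulation does not account for, though this is harmless asymptotically.

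Your arithmetic $\floor{(d-1)/2}+\floor{(d-2)/2}=d-2$ is correct and matches the paper's (the paper writes it one dimension up as $\floor{d/2}+\floor{(d-1)/2}=d-1$ for $(d+1)$-polytopes). But without a correct mechanism for endowing each Padrol polytope with many regular triangulations, and without a correct injectivity argument, the proposal does not constitute a proof.
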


All the polytopes that we construct are  $\lfloor(d-1)/2\rfloor$-neighborly. That is, they are neighborly for odd~$d$, but only $\left(\frac{d}{2}-1\right)$-neighborly if $d$ is even. In fact, for even $d$ the number of neighborly polytopes in our family is at most the same as in the family constructed by Padrol.
See Remark~\ref{rmk:notmanyneighborly} for more details.

\medskip

Enumerating polytopes is intimately tied to enumerating \defn{regular} triangulations of point configurations; that is, triangulations arising as lower envelopes of polytopes of one  dimension more.
In fact, the number of (combinatorial types of) simplicial $d$-polytopes with $n$ vertices coincides with that of $(d-1)$-dimensional regular triangulations with $n-1$ vertices. See the beginning of Section~\ref{sec:manypolytopes} for details on this relation.
In the same vein, counting all triangulations, regular or not, is related to counting simplicial spheres. 

In particular, the Goodman-Pollack bound implies the same upper bound of $(n!)^{d^2\pm o(1)}$ for the number of  regular triangulations, while the construction of Kalai~\cite{Kalai88} can be adapted to derive that the cyclic $d$-polytope with $n$ vertices has at least~$e^{\Omega(n^{\floor {d/2}})}$ triangulations in total~\cite[Theorem 6.1.2]{DRS10}. 

Observe that  the upper bound is for the total number of (combinatorially different) regular triangulations of \emph{all} polytopes (for fixed parameters $n$ and $d$), while the construction of Kalai counts triangulations of \emph{a single polytope}. 
For regular triangulations of a single polytope, it is shown in \cite[Theorem 7.2.10]{DRS10} that the Cartesian product of a cyclic $3$-polytope with $n$ vertices and a segment has at least $(n/2)! = (n!)^{1/2 \pm o(1)}$ regular triangulations.  The second result in this paper is a significant improvement of this lower bound, showing  for example that:

\begin{theorem}\label{thm:manytriangulations}
For fixed $d\ge3$ and $n$ going to infinity, there are realizations of the cyclic $d$-polytope with $n$ vertices having at least 
\[
(n!)^{  \ffloor{d-1}{2}  \pm o(1)}
\]
regular triangulations.
\end{theorem}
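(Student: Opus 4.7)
The plan is to use the Gelfand--Kapranov--Zelevinsky correspondence and to construct on a suitable realization of $C(n,d)$ an explicit family of at least $(n!)^{\ffloor{d-1}{2}\pm o(1)}$ generic height functions whose induced regular triangulations are pairwise distinct. I would fix vertices $v_1,\dots,v_n$ on the moment curve and set $k=\ffloor{d-1}{2}$, partitioning the indices into $k$ blocks $B_1,\dots,B_k$ of approximately equal size $m\approx n/k$ (arranged, for instance, as $k$ interleaved arithmetic progressions along the curve, so that each block still traces out a cyclic sub-configuration).

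For each tuple of permutations $\boldsymbol{\pi}=(\pi_1,\dots,\pi_k)\in\prod_{j=1}^{k}S_{|B_j|}$, I would assign a height of the form
\[
h_{\boldsymbol{\pi}}(v_i) = f_{\mathrm{base}}(v_i) + \sum_{j=1}^{k} \varepsilon_j\, g_{j,\pi_j}(v_i),
\]
where $f_{\mathrm{base}}$ is a strictly convex base lifting, each perturbation $g_{j,\pi_j}$ is supported on $B_j$ and encodes $\pi_j$ (for instance, $g_{j,\pi_j}(v_i) = \pi_j(i)$ if $v_i\in B_j$ and $0$ otherwise), and the scales satisfy $\varepsilon_1\gg\varepsilon_2\gg\cdots\gg\varepsilon_k>0$. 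The lower envelope of $\{(v_i,h_{\boldsymbol{\pi}}(v_i))\}_i$ defines a regular triangulation $T_{\boldsymbol{\pi}}$ of our realization of $C(n,d)$, generic for suitable $\varepsilon_j$.

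The crux is to show that $\boldsymbol{\pi}\mapsto T_{\boldsymbol{\pi}}$ is injective. The scale separation decouples the perturbations and enables an iterative recovery: the coarse layer of $T_{\boldsymbol{\pi}}$ is determined by $\pi_1$; restricting to a sub-complex canonically attached to $\pi_1$ reveals $\pi_2$; and so on. Within each layer, the relevant permutation should be read off as the placing/pulling order of its block's vertices, witnessed by simplices of $T_{\boldsymbol{\pi}}$ that cross all $k$ blocks transversally. This is where the $\ffloor{d}{2}$-neighborliness of $C(n,d)$ enters: every subset of at most $\ffloor{d}{2}$ vertices is a face of every triangulation, so taking one vertex per block yields a face whenever $k\le\ffloor{d}{2}$, while the tighter bound $k\le\ffloor{d-1}{2}$ leaves one extra slot inside each transversal simplex to encode the placing order of the block. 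Counting then gives $\prod_j|B_j|!\ge ((n/k)!)^k=(n!)^{k\pm o(1)}$ by Stirling, matching the claimed exponent.

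The main obstacle is precisely the injectivity step: verifying that the placing orders within each block leave distinguishable combinatorial fingerprints on $T_{\boldsymbol{\pi}}$ and that the perturbations at smaller scales do not scramble the fingerprint of the coarser ones. I would attack this through Gale's evenness criterion together with the explicit effect of an infinitesimal lifting on a cyclic sub-configuration, proving by induction on $j$ that the star of a suitable flag of transversal simplices in $T_{\boldsymbol{\pi}}$ is the placing triangulation of $B_j$ in the order $\pi_j$. The quantitative choice of the scale hierarchy $\varepsilon_1\gg\cdots\gg\varepsilon_k$, and the verification that the resulting lifting is still combinatorially a cyclic polytope, would be the remaining technical work.
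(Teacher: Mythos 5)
Your proposal contains a fatal \emph{counting} error that makes the whole strategy unable to reach the stated exponent, independently of whether the injectivity argument could be made to work. You partition $[n]$ into $k=\ffloor{d-1}{2}$ blocks of size $\approx n/k$ and parametrize your liftings by tuples of permutations, one per block, so the total number of candidate triangulations is at most $\prod_j |B_j|! \approx \big((n/k)!\big)^k$. You assert this is $(n!)^{k\pm o(1)}$, but that Stirling computation is wrong: for fixed $k$,
\[
\log\Big(\big((n/k)!\big)^{k}\Big) = k\Big(\tfrac{n}{k}\log\tfrac{n}{k}-\tfrac{n}{k}+O(\log n)\Big) = n\log n + O(n),
\]
whereas $\log\big((n!)^{k}\big) = kn\log n + O(n)$. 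So $\big((n/k)!\big)^k = (n!)^{1+o(1)}$, which is off by a factor of $k$ in the exponent. Permuting within disjoint blocks simply does not generate enough diversity: each vertex can only contribute a factor of order $n/k$ (its rank inside its block), while the target bound requires each vertex to contribute a factor of order $n^k$. No choice of block sizes fixes this; the same computation shows any partition into $O(1)$ blocks yields at most $(n!)^{1+o(1)}$ tuples.

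The paper reaches the exponent $k$ by a genuinely different mechanism, and it is instructive to see why it succeeds where block-permutations cannot. It proceeds by iteratively splitting a vertex $p$ into two triangulation-inseparable copies $p,p'$ (Lemma~\ref{lem:triang_insep}), and the key quantitative step, Lemma~\ref{lem:inductionstep}, shows that a single such split multiplies $|\regtriang{\pc}|$ by at least $C+1$, where $C$ is the minimum number of cells in a regular triangulation of the contraction $P/p$. For a configuration whose vertex figure is $k$-neighborly, Lemma~\ref{lem:cells_triang_neighb} gives $C\ge\binom{m-d+k}{k}=\Theta(m^k)$ when $m$ points are present. Multiplying over $m=d,\dots,n-1$ yields $\prod_m \Theta(m^k)=(n!)^{k\pm o(1)}$: the crucial point is that the multiplicative gain at each step is \emph{polynomial of degree $k$ in the current number of points}, not a bounded factor or a mere choice of position inside a small block. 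Your construction has no analogue of this per-vertex $\Theta(m^k)$ gain, which is exactly what the $h$-vector bound on cell counts in $k$-neighborly triangulations provides. If you want to salvage the GKZ-style viewpoint, the correct secondary-polytope interpretation of the paper's argument is a family of monotone flip paths of length $\Theta(m^k)$ created at each vertex insertion, not a product of block permutations.
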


It has to be noted that the total number of triangulations of a polytope (or point configuration) depends only on its \defn{oriented matroid} (another combinatorial invariant that is finer than the combinatorial type), while the number of regular triangulations varies for different realizations of the same oriented matroid.

Apart of its intrinsic interest, 
Theorem~\ref{thm:manytriangulations} is an intermediate step for Theorem~\ref{thm:manypolytopes}; the proof of Theorem~\ref{thm:manypolytopes} consists in showing that \emph{all} of the many polytopes constructed by Padrol~\cite{Pad13} admit realizations with the many regular triangulations stated in Theorem~\ref{thm:manytriangulations}. 

This makes our proof  of Theorem~\ref{thm:manypolytopes}  more geometric, as opposed to combinatorial, than previous constructions of ``many'' polytopes.
In fact, the combinatorial types of polytopes obtained with our method may depend on choices made along the construction, for example via the choice of realizations used for the Padrol polytopes, which affects what triangulations of them are regular, or via the particular lifting vectors used for the regular triangulations.

\section{Definitions and notation}
We will follow~\cite{Zie95} and \cite{DRS10} for the terminology concerning convex polytopes, point configurations, and triangulations, and we refer the reader to these references for background on these topics.

A \defn{point configuration} is an ordered sequence $\pc =(\p _1, \ldots, \p _n)\in \RR^{d\times n}$. 
We formally consider $P$ a sequence rather than a set since the ordering of the points $\p_i$ is sometimes important, 
but we will slightly abuse notation and write things like $\p_i\in P$, or call the points $\p_i$ the elements of $P$.
In this paper we will usually assume that the points in $P$ are distinct
and in \defn{convex position}, where the latter means that all of them are vertices of the polytope $\conv(\pc)$.
We say that $\pc$ is \defn{$k$-neighborly} if any subset of $k$ points is the vertex set of a face of~$\conv(\pc)$, and just \defn{neighborly} if it is $\ffloor{d}{2}$-neighborly; the latter makes sense since the simplex is the only $d$-polytope that is more than $\ffloor{d}{2}$-neighborly.

A \defn{triangulation} $\tr$ of $\pc$ is a simplicial complex on a subset of $[n]$ such that
\begin{enumerate}[(i)]
\item $\bigcup_{F\in \tr} \conv(\set{\p_i}{i\in F}) = \conv(\pc)$,
\item for all $F, F'\in T$, $\conv(\set{\p_i}{i\in F})\cap\conv(\set{\p_i}{i\in F'})$ is a common face of $\conv(\set{\p_i}{i\in F})$ and $\conv(\set{\p_i}{i\in F'})$.
\end{enumerate}
More generally, a \defn{subdivision} of $\pc$ is a collection $\tr$ of subsets of $[n]$, closed under taking faces (if $F\in \tr$ then the set of indices of points in $\pc$ in a face of $\conv(\set{\p_i}{i\in F})$ must be in $\tr$ too), that fulfills the two conditions above. See \cite[Sec.~2.3]{DRS10} for details.

A subdivision $\tr$ of $\pc$ is \defn{regular} if there is a lifting vector $\p[w] \in \RR^{[n]}$ such that 
for any $F\in \tr$, $\conv(\set{(\p_i, \p[w](i))}{i\in F})$ is a lower face of $\conv(\set{(\p_i, \p[w](i))}{i\in [n]})$. Here, we call a face $F$ of a polytope~$P$ in $\RR^{d+1}$ \defn{lower} if its outer normal cone contains a vector with last coordinate negative. That is, if there is a functional $\p[c]\in \RR^{d+1}$ with $c_{d+1}<0$ that is maximized on $F$. 
We denote \defn{$\regtriang{\pc}$} the set of regular triangulations of $\pc$. 
For a triangulation $\tr$, we call \defn{cells} its maximal faces.
(These are sometimes called facets, but we reserve the word \defn{facet} for facets of a polytope). 

A point $\p[q]\notin \pc$ is said to be in \defn{general position} with respect to $\pc$ if no hyperplane spanned by points of $P$ contains $\p[q]$, and in \defn{very general position} with respect to $\pc$ if moreover no small perturbation of $\p[q]$ changes $\regtriang{\pc\cup\{\p[q]\}}$. 
An argument similar to that in \cite[Part 2]{Ath99} shows that configurations in very general position form a dense open subset of the space of all point configurations.

Two points $\p_i, \p_j\in \pc$ are said to be \defn{triangulation-inseparable} in $\pc$ if we have that 
\begin{itemize}
\item[(i)] $\regtriang{\pc\setminus\{\p_i\}} = \regtriang{\pc\setminus\{\p_j\}}$ up to relabeling $j$ to $i$, and 
\item[(ii)]  for any $\tr\in \regtriang{\pc\setminus\{\p_i\}}$ there is a lifting vector $\p[w]\in \RR^{[n]}$ which restricted to both $\pc\setminus\{\p_i\}$ and $\pc\setminus\{\p_j\}$ produces $\tr$ as a regular triangulation (up to relabeling $j$ to $i$).
\end{itemize}

Let $\p$ be a vertex of $\conv(\pc)$. We define \defn{$\pc/\p$} to be any point configuration obtained as the intersection of the half-lines positively spanned by $\set{\p'-\p}{\p'\in \pc\setminus\{\p\}}$ with an affine hyperplane that does not contain $\p$ and intersects all these half-lines. Following~\cite[Definition 4.2.9]{DRS10} we call $\pc/\p$ the \defn{contraction} of $\pc$ at the point $\p$.
All the configurations that can be obtained as $\pc/\p$ have the same triangulations and the same regular triangulations.
In fact, regular triangulations of $\pc/\p$ are exactly the links at $\p$ of regular triangulations of $\pc$~\cite[Lemmas 4.2.20 and 4.2.22]{DRS10}.
Here, the \defn{link} of a triangulation $\tr$ at a point~$\p_i$, which we denote \defn{$\tr/\p_i$}, is defined as
\[
\tr/\p_i:=\set{F\subset [n]\setminus \{i\}}{F\cup \{i\} \in \tr}.
\]

\section{Many regular triangulations}

The main idea of our construction of configurations with a large number of regular triangulations is to split a point into two triangulation-inseparable points and to estimate the number of regular triangulations generated after this operation. This is inspired by the study of triangulations of cyclic polytopes done in~\cite{Rambau1997,RambauSantos2000}.

First, we show that we can indeed obtain triangulation-inseparable pairs by such a splitting.

\begin{lemma}\label{lem:triang_insep}
Let $\pc$ be a point configuration in $\RR^d$ and $\p\in \pc$ in very general position with respect to $\pc\setminus\{\p\}$. Then there is an $\varepsilon >0$ such that $\p$ and $\p'$ are triangulation-inseparable in $\pc\cup\{\p'\}$ for any $\p'\in \ball{\p}{\varepsilon}$ in very general position with respect to $P$.
Here $\ball{\p}{\varepsilon}$ denotes the ball of radius $\varepsilon$ centered at $\p$.
\end{lemma}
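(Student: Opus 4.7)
Write $\pc[Q]:=\pc\setminus\{\p\}$ and let $\regtriang{\pc}=\{\tr[T_1],\dots,\tr[T_N]\}$ be the finite set of regular triangulations of $\pc$. The plan is to check the two clauses of the triangulation-inseparability definition separately, each time exploiting a continuity/stability property of regular triangulations under small perturbations.

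Clause (i) is essentially the definition of very general position: because $\p$ is in very general position with respect to $\pc[Q]$, there exists $\varepsilon_1>0$ such that $\regtriang{\pc[Q]\cup\{\p'\}}=\regtriang{\pc[Q]\cup\{\p\}}$ (under the bijection relabeling $\p'\leftrightarrow\p$) for every $\p'\in\ball{\p}{\varepsilon_1}$.

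For clause (ii) my plan is to realize each $\tr\in\regtriang{\pc}$ as the restriction of a single lifting on $\pc[Q]\cup\{\p,\p'\}$ by giving $\p$ and $\p'$ the \emph{same} height. To justify this, recall that for each $\tr$ the secondary cone $\cC_\tr(\pc[Q]\cup\{\p\})\subseteq\RR^{\pc[Q]\cup\{\p\}}$ is full-dimensional (since $\tr$ is regular) and is carved out by strict linear inequalities of the form $\p[w](\p_i)>\sum_{j\in\sigma}\lambda_j(\p_i)\p[w](\p_j)$, one for each pair consisting of a cell $\sigma\in\tr$ and a point $\p_i\notin\sigma$. Here the barycentric coordinates $\lambda_j(\p_i)$ are rational (in particular continuous) functions of the coordinates of $\p_i$ and of the vertices of $\sigma$; consequently the whole cone varies continuously as $\p$ is perturbed. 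Fixing a lifting $\overline{\p[w]}^{\tr}$ in the interior of $\cC_\tr(\pc[Q]\cup\{\p\})$ then yields an $\varepsilon_\tr>0$ such that $\overline{\p[w]}^{\tr}$, reinterpreted as a lifting on $\pc[Q]\cup\{\p'\}$, remains in the interior of $\cC_\tr(\pc[Q]\cup\{\p'\})$ for every $\p'\in\ball{\p}{\varepsilon_\tr}$.

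Setting $\varepsilon:=\min(\varepsilon_1,\varepsilon_{\tr[T_1]},\dots,\varepsilon_{\tr[T_N]})>0$, for each $\p'\in\ball{\p}{\varepsilon}$ in very general position with respect to $\pc$ and each $\tr\in\regtriang{\pc}$, the lifting $\p[w]\colon\pc[Q]\cup\{\p,\p'\}\to\RR$ defined by $\p[w]|_{\pc[Q]}:=\overline{\p[w]}^{\tr}|_{\pc[Q]}$ and $\p[w](\p):=\p[w](\p'):=\overline{\p[w]}^{\tr}(\p)$ simultaneously produces $\tr$ on both $\pc[Q]\cup\{\p\}$ and $\pc[Q]\cup\{\p'\}$, giving clause (ii). The only non-routine ingredient is the continuous dependence of the defining inequalities of $\cC_\tr$ on the positions of the points being perturbed; I expect that to be the main technical point to write out carefully, but beyond it everything reduces to a finiteness argument over the finite set $\regtriang{\pc}$.
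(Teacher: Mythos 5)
Your proposal is correct and follows essentially the same route as the paper: split into the two clauses of triangulation-inseparability, dispose of~(i) via the definition of very general position, and for~(ii) choose for each $\tr\in\regtriang{\pc}$ a specific lifting vector whose property is stable under small perturbations of $\p$, then take the minimum $\varepsilon$ over the finitely many triangulations. The only cosmetic difference is in how the stability of the chosen lifting is justified: the paper lifts $\p$ to a point in general position in $\RR^{d+1}$ and observes that the lifted polytope's face lattice is unchanged under small perturbation of that point, whereas you argue that the lifting lies in the interior of the secondary cone and that the cone's defining inequalities (via barycentric coordinates) depend continuously on the point positions --- these are two phrasings of the same open-condition argument.
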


\begin{proof}
Up to relabeling, we can assume that $\pc=(\p_1, \ldots, \p_n)$ and $\p=\p_n$. 
By definition of being in very general position with respect to $\pc\setminus\{\p_n\}$, there exists some $\eta>0$ such that $\regtriang{\pc}=\regtriang{\pc\setminus\{\p_n\}\cup\{\p'\}}$ for all $\p'\in \ball{\p_n}{\eta}$. In particular, any such a $\p'$ fulfills the first condition for being triangulation-inseparable with $\p$.

For each regular triangulation $\tr\in\regtriang{\pc}$ we can choose a specific lifting vector $\p[w]_{\tr}\in \RR^{[n]}$ that induces~$\tr$, and choose it so that the point $(\p_n, \p[w]_{\tr}(n))$ is still in general position with respect to the lifted  configuration $\set{(\p_i, \p[w]_{\tr}(i))}{i\in [n-1]}$. Hence there is some $0<\varepsilon_T< \eta$ such that $\set{(\p_i, \p[w]_{\tr}(i))}{i\in [n]}$ and $\set{(\p_i, \p[w]_{\tr}(i))}{i\in [n-1]}\cup\{(\p', \p[w]_{\tr}(n))\}$ have the same faces, for all $\p'\in \ball{\p_n, \varepsilon_T}$. This means that $\p[w]_T$ induces $T$ as a regular triangulation of $\pc\setminus\{p_n\}\cup\{\p'\}$ for all $\p'\in \ball{\p_n}{\varepsilon_{\tr}}$.

If we take $\varepsilon =\displaystyle{\min_{\tr\in \regtriang{\pc}} \varepsilon_T}$, we obtain that $\p$ and $\p'$ are triangulation-inseparable in $\pc\cup\{\p'\}$ for all~$\p'\in \ball{\p}{\varepsilon}$.
\end{proof}

The following result is our main technical lemma, which provides lower bounds for the number of triangulations under the presence of triangulation-inseparable points. The main ideas are illustrated in Example~\ref{ex:movingtriangulation}.

\begin{lemma}\label{lem:inductionstep}
Let $\pc$ be a point configuration  in $\RR^d$ and let $\p\in \pc$ be a vertex of $\conv(\pc)$ that is in very general position with respect to $\pc\setminus\{\p\}$. 
We denote $C$ the minimum number of cells in a regular triangulation of $\pc/p$.

Let $\p'$ be such that $\p$ and $\p'$ are triangulation-inseparable in $\pc\cup\{\p'\}$, $\p'$ is in very general position with respect to $\pc$, and
 $\p'$ is a vertex of $\conv(\pc\cup\{\p'\})$ . Then we have
\[ 
|\regtriang{\pc\cup\{\p'\}}| \geq |\regtriang{\pc}| \times (C+1). 
\]
\end{lemma}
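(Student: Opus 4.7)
The plan is to associate to each $T\in\regtriang{\pc}$ a family of $C+1$ distinct regular triangulations of $\pc\cup\{\p'\}$, with the families coming from different $T$'s pairwise disjoint, so that the total count is at least $|\regtriang{\pc}|\cdot(C+1)$.

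To set up the family, I would fix a lifting $\p[w]$ of $\pc$ inducing $T$ and, using triangulation-inseparability, extend it to a lifting of $\pc\cup\{\p'\}$. Then I would vary only the value at $\p'$, writing $\p[w]^{(h)}(\p')=h$, and track how the induced regular triangulation $\tilde{T}^{(h)}$ of $\pc\cup\{\p'\}$ evolves as $h$ decreases from $+\infty$ to $-\infty$. For $h$ very large, $(\p',h)$ lies strictly above the lower envelope of the lifted $\pc$, so $\p'$ is not a vertex of $\tilde{T}^{(h)}$ and this triangulation coincides with $T$. As $h$ drops below a first critical value, $\p'$ joins the lower envelope and the star of $\p'$ grows through a finite sequence of bistellar flips, one per critical value of $h$.

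The technical heart of the argument is to show that this sequence realizes at least $C+1$ distinct triangulations. I would work with the link of $\p$ in $\tilde{T}^{(h)}$, which is a regular triangulation of the $(d{-}1)$-dimensional configuration $\pc/\p\cup\{\overline{\p'}\}$, where $\overline{\p'}$ denotes the image of $\p'$ under the contraction at $\p$. Its restriction to $\pc/\p$, obtained by forgetting $\overline{\p'}$, is a regular triangulation of $\pc/\p$ and so has at least $C$ cells by the definition of $C$. Combining this minimality with a careful tracking of how each bistellar flip alters the link of $\p$, I would argue that the whole $h$-sequence must pass through at least $C+1$ combinatorially distinct triangulations. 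Making this flip-counting precise is the main obstacle: one has to exclude the possibility that flips happening far from $\p$ could indirectly substitute for the combinatorial changes to the link of $\p$ that the minimality of $C$ forces.

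Finally, to turn $C+1$ triangulations per $T$ into the claimed global bound, I would restrict to the initial range of $h$ in which the cells of $\tilde{T}^{(h)}$ not containing $\p'$ still form a nonempty subcomplex of $T$---that is, before $\p'$ has fully absorbed the star of $\p$. In this range, $T$ can be recovered canonically from $\tilde{T}^{(h)}$ by collecting its $\p'$-free cells and completing them to a regular triangulation of $\conv(\pc)$, so triangulations produced from different $T$'s cannot coincide and the $|\regtriang{\pc}|\cdot(C+1)$ constructed triangulations are all distinct.
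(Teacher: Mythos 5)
Your overall strategy — deform a lifting vector, watch the induced regular triangulation change by flips, and use the link of $\p$ to count at least $C+1$ stages — is exactly the spirit of the paper's proof, but the specific deformation you chose is wrong in a way that breaks the counting.

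The paper does not vary only the height of $\p'$. Its deformation $w_t$ is two-phased: for $t\le 0$ it \emph{lowers} $\p'$ from $+\infty$ to its natural height while $\p$ stays put, and for $t\ge 0$ it \emph{raises} $\p$ from its natural height to $+\infty$ while $\p'$ stays put. The crucial consequence (their property (1)) is that at every $t$ the heights of all points are $\ge w$, so any cell of $T$ supported on $P\setminus\{\p\}$ is still a lower face of the lifted configuration, and hence the restriction of $T_t$ to $P\setminus\{\p\}$ is constant and equal to that of $T$. This invariant is what makes $T$ recoverable from every intermediate triangulation and what confines the non-simplicial cells to the form $\tau\cup\{\p,\p'\}$ with $\tau\in T/\p$. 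Your deformation $w^{(h)}(\p')=h$ decreases $h$ below $w(\p')$, so the heights are \emph{not} bounded below by $w$; once $h<w(\p')$ the lifted $\p'$ can drop beneath supporting hyperplanes of cells of $T$ that have nothing to do with $\p$, and the restriction of $\tilde T^{(h)}$ to $P\setminus\{\p\}$ is no longer that of $T$. You flag this yourself (``flips happening far from $p$'') and propose restricting to an ``initial range of $h$,'' but you do not show that this truncated range still contains $C+1$ triangulations. It need not: the initial range corresponds only to $t\le 0$ in the paper's notation, and at $t=0$ the sizes of $L_0$ and $L_0'$ can be anything summing to $c(T/\p)$, so the first half alone may produce as few as one or two triangulations. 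The paper gets all $c(T/\p)+1$ precisely because its second phase (raising $\p$) drives $L_t'$ the rest of the way up to $T/\p$, while still preserving the invariant.

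A smaller point: for $h$ very large you claim $\p'$ is not a vertex of $\tilde T^{(h)}$. If $\p'$ is a vertex of $\conv(\pc\cup\{\p'\})$ — which is the situation in every application in this paper — it is a vertex of the lifted polytope at every height and therefore lies on the lower envelope regardless of $h$. The correct statement is that in that limit $\tilde T^{(h)}$ is the \emph{placing} triangulation of $\p'$ into $T$, in which $\p'$ is a vertex with a small star; this is the paper's property (7). To repair your argument you should replace the one-sided lowering of $\p'$ by the paper's two-phase deformation (lower $\p'$, then raise $\p$), which both respects the invariant and delivers the full count of $c(T/\p)+1$ good triangulations in $\phi^{-1}(T)$.
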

 
\begin{proof}

We denote $\pc'$ the point configuration $\pc\setminus\{\p\}\cup\{\p'\}$.

Let us call a regular triangulation $\tilde{\tr}$ of $\pc\cup \pc'$ \emph{good} if there is a regular triangulation $T$ of $\pc$ such that $T$ and $\tilde{\tr}$ coincide when restricted to $\pc\setminus\{\p\}$. 
Since a regular triangulation of $\pc$ is determined by its restriction to $\pc\setminus \{\p\}$, this definition implicitly gives a map
\[
\phi: \{\text{good triangulations of $\pc\cup \pc'$}\} \to \regtriang{\pc}.
\]
We claim that for every $T\in \regtriang{P}$ we have 
\[
|\phi^{-1}(\tr)|  \ge \size{\tr/\p}+1 \ge C+1, 
\]
where we denote by $\size{L}$ (and call size of $L$) the number of cells of a pure polyhedral complex $L$.
This formula implies the statement.

Let $\tr$ be a regular triangulation of $\pc$. To avoid confusion we denote by $\tr'$ the triangulation $\tr$ but considered as a triangulation of $\pc'$. 
Let $w\in \RR^{\pc\cup \pc'}$ be a lifting vector producing $\tr$ and $\tr'$ when restricted to $\pc$ and $\pc'$, which exists because $\p$ and $\p'$ are triangulation-inseparable. We will assume moreover a genericity condition on $w$ that will be detailed later at item~(8).

For each $t\in\RR$ we consider the following lifting vector $w_t\in\RR^{P\cup P'}$, which varies continuously with $t$:
\begin{itemize}
\item For $q\in P\setminus\{p\}$, $w_t(q) :=w(q)$ is independent of $t$.
\item If $t\le 0$ then $w_t(p):=w(p)$ and $w_t(p'):=w(p')-t$.
\item If $t\ge 0$ then $w_t(p):=w(p)+t$ and $w_t(p'):=w(p')$.
\end{itemize}

Let $T_t$ be the regular subdivision of $P\cup P'$ produced by $w_t$. We have that:

\begin{enumerate}
\item \emph{The restriction of $T_t$ to $P\setminus \{p\}$ coincides with the restriction of $T$:} Indeed, if $\sigma$ is a face of $T$ contained in $P\setminus \{p\}$ then $w$, and hence any $w_t$, sends all of $P\cup P'\setminus \sigma$ above some supporting hyperplane of the lift of $\sigma$; hence, $\sigma$ is a face in $T_t$. Conversely, suppose $\sigma$ is a cell in $T_t$ for some~$t$ that is contained in $P\setminus \{p\}$. If $t\le 0$ then $w_t$, and hence $w$, sends $P\setminus \sigma$ above the hyperplane. Hence, $\sigma$ is a cell of $T$. If~$t\ge 0$ then $w_t$, and hence $w$, sends $P'\setminus \sigma$ above the hyperplane. Hence, $\sigma$ is a cell of $T'$, which restricted to $P\setminus \{p\}$ coincides with $T$.

\item \emph{If $t\le 0$ then for every cell $\sigma\in T_t$,  $\sigma\setminus \{p'\}$ is a face in $T$:} This is because for $t\le 0$ we have that~$w_t$ restricted to $P$ equals $w$, which produces $T$ as a regular triangulation of $P$.

\item \emph{If $t\ge 0$ then for every cell $\sigma\in T_t$,  $\sigma\setminus \{p\}$ is a face in $T'$:} Same proof.

\item \emph{If $T_t$ is not a triangulation for a certain $t$ then every non-simplicial cell is of the form $\tau\cup\{p,p'\}$ where $\tau$ is a cell of $T/p$.} 
Let $\sigma$ be a non-simplicial cell of $T_t$. By claims (2) and (3), $\sigma$ uses both of $p$ and $p'$ and either $\sigma\setminus\{p'\}$ is in $T$
or $\sigma\setminus\{p\}$ is in $T'$. Hence, $\sigma\setminus\{p,p'\}$ is in $T/p=T'/p'$.

\item \emph{Each such $\tau\cup\{p,p'\}$ appears as a cell for at most one value of $t$:}

If $t\le 0$ then the value of $t$ is fixed by the fact that $w_t(p')=w(p')-t$ equals the height at which the lifted hyperplane containing $\tau\cup\{p\}$ meets the vertical line $\{p'\}\times \RR$ ; same, changing $p$ and $p'$, if $t\ge 0$.
Thus, we  at most have  one value of $t$ in $(-\infty,0]$ and one in $[0,\infty)$.
Moreover, there cannot be two values, one negative and one positive. Indeed, if $\tau\cup\{p,p'\}$ is a cell of $T_t$ for $t<0$, then the point $(p', w(p'))$ is below the lifted hyperplane containing $\tau\cup\{p\}$, so $(p, w(p))$ is above the lifted hyperplane containing $\tau\cup\{p'\}$ and there is no $t'>0$ such that $\tau\cup\{p,p'\}$ is a cell of $T_{t'}$. 

\item \emph{Assuming $T_t$ is a triangulation, let $L_t:= T_t/p \setminus p'$ and $L'_t:= T_t/p' \setminus p$. $L_t$ and $L'_t$ are contained in $T/p$ and they are complementary in the sense that their union equals $T/p$ and their intersection is lower dimensional.}
$L_t$ and $L'_t$ are contained in $T/p=T'/p'$ by properties  (2) and (3). They are complementary because every cell $\tau\in T/p$ needs to be joined to one and only one of $p$ and $p'$ to give a cell of $T_t$.

\item \emph{In the limit when $t\to -\infty$ we have that $L_t=T/p$ (and hence $L'_t$ is lower-dimensional) and in the limit $t\to +\infty$ we have that  $L'_t=T/p$ (and hence $L_t$ is lower-dimensional).}
In these limits, $T_t$ equals the triangulation obtained by \emph{placing} point $p'$ (respectively $p$) in $T$ (respectively in $T'$). This implies  $L_t=T/p$ (respectively $L'_t=T/p$). 

\item \emph{We can take $w$ sufficiently generic so that no $T_t$ contains two different non-simplicial cells.} 
Suppose that $(\tau_1, \tau_2)$ is a pair of cells in $T/p$ such that $\tau_1\cup\{p,p'\}$ and $\tau_2\cup\{p,p'\}$ are in $T_t$ for the same value of $t$.
Let $H_1$ and $H_2$ be the two hyperplanes in $\RR^{d+1}$ spanned by the lifts of $\tau_1\cup\{p\}$ and $\tau_2\cup \{p\}$ for that $t$. Our hypothesis implies that $H_1$ and $H_2$ intersect the vertical line $\{p'\}\times \RR$ at the same height (namely, at height $w_t(p')$). If this happens for a sufficiently generic choice of $w$ then the intersection of $H_1$ with $\{p'\}\times \RR$ does not change when slightly perturbing the heights of all points in $\tau_1\setminus \tau_2$: This implies that this intersection point lies in the affine span of (the lifted) configuration $(\tau_1\cap \tau_2)\cup \{p\}$. Hence, $p'$ lies in the affine span of (the original) $(\tau_1\cap \tau_2)\cup \{p\}$, and $p'$ is not in general position.

\item \emph{If $T_t$ is not a triangulation, and $\varepsilon>0$ is small enough, then $\size{L_{t+\varepsilon}} =\size{L_{t-\varepsilon}}-1$ and $\size{L'_{t+\varepsilon}} =\size{L'_{t-\varepsilon}}+1$.} There is a single cell of $T_t$ of the form $\tau\cup\{p,p'\}$. 
If $s$ is in the neighborhood of $t$, all the cells of $T_s$ not contained in $\tau\cup\{p,p'\}$ remain unchanged because they are defined by an open condition on~$s$. For $s<t$, we have that $\tau\cup\{p\}$ is a cell of $T_s$ but $\tau\cup\{p'\}$ is not, because $p'$ is above the hyperplane spanned by $\tau\cup\{p\}$. Similarly, for $s>t$, we have that $\tau\cup\{p\}$ is not a cell of $T_s$ but $\tau\cup\{p'\}$ is. 
\end{enumerate}

Claim (1) says that whenever $T_t$ is a triangulation it is a 
good triangulation and it lies in the preimage of $T$. As we move $t$ continuously from $-\infty$ to $+\infty$ there are finitely many values of $t$ where $T_t$ is not a triangulation, by claims (4) and (5). Of course, outside those values the triangulation $T_t$ is constant, and claim (8)  says that (if $p'$ is in general position and $w$ is generic) at those values the change in the triangulation is a geometric bistellar flip in a cell of the form $\tau\cup\{p,p'\}$. This flip changes the numbers of cells of $T/p$ contained in $L_t$ and in $L'_t$ by one unit, increasing $L'_t$ and decreasing $L_t$ as $t$ increases, by (9).
By property (7) the size of $L'_t$ grows from zero to $\size{\tr/\p}$ as $t$ goes from $-\infty$ to $+\infty$, so we encounter 
at least $\size{\tr/\p}+1$ different good triangulations in the preimage of $\tr$ along the process.
\end{proof}

\begin{figure}[htpb]
        \centering
\def\hepta{\draw(p) -- (p') -- (p1) -- (p2) -- (p3) -- (p5) -- (p4) -- cycle;}
\def\hexa{\draw(p) -- (p1) -- (p2) -- (p3) -- (p5) -- (p4) -- cycle;}

\newcommand{\triang}[3]{%
    \fill[color=RedOrange!30] (p) -- (p#3) -- (p') -- cycle;
    \draw (p) -- (p');
    \draw \foreach \x in {#1} {(p)--(p\x)};
    \draw \foreach \y in {#2} {(p')--(p\y)};
    \draw [thick, RedOrange] (p) -- (p#3);
    \draw [thick, RedOrange] (p#3) -- (p');
    \draw (p3) -- (p5) -- (p4) ;
    \draw [thick, RedOrange] (p#3) node{$\bullet$};
    \draw [BlueViolet] \foreach \x in {#1} {(p\x) node{$\bullet$}};
    \draw [PineGreen] \foreach \x in {#2} {(p\x) node{$\bullet$}};
    \draw (p) node[right]{$p$};
    \draw (p') node[right]{$p'$};
    \draw (p1) node[above]{$1$};
    \draw (p2) node[left]{$2$};
    \draw (p3) node[left]{$3$};
    \draw (p5) node[below]{$4$};
    \draw (p4) node [below]{$5$};

}

\newcommand{\Lt}[3]{%
	\draw (q1)+(0,0.2) node[above]{$L_{t_{#3}}$};
	\draw [BlueViolet] \foreach \x in {#1} {(q\x) node{$\bullet$}};
	\draw \foreach \x in {#1} {(q\x) node[left]{$\x$}};
	\draw [thick, BlueViolet] \foreach \x in {#1} \foreach \y in {#1} {(q\x) -- (q\y)};
	
	\draw (q1')+(0,0.2) node[above]{$L_{t_{#3}}'$};
	\draw [PineGreen] \foreach \x in {#2} {(q\x ') node{$\bullet$}};
	\draw \foreach \x in {#2} {(q\x ') node[left]{$\x$}};
	\draw [thick, PineGreen] \foreach \x in {#2} \foreach \y in {#2} {(q\x ') -- (q\y ')};
}

\begin{tikzpicture}
    \coordinate (p) at (1.1,1.11);
    \coordinate (p') at (1, 1.5);
    \coordinate (p1) at (0.3,1.88);
    \coordinate (p2) at (-0.76,1.54);
    \coordinate (p3) at (-0.76,0.69);
    \coordinate (p5) at (-0.10,0.16);
    \coordinate (p4) at (0.73,0.35);

    \coordinate (q1) at (0, 1.5);
    \coordinate (q2) at (0, 1);
    \coordinate (q3) at (0, 0.5);
    \coordinate (q5) at (0, 0);
    \coordinate (q1') at (1, 1.5);
    \coordinate (q2') at (1, 1);
    \coordinate (q3') at (1, 0.5);
    \coordinate (q5') at (1, 0);
  
\matrix[column sep=0.5cm,row sep=0.5cm]
{   
	\draw (q1)+(0,0.7) node[above]{$T$};

	\fill[BlueViolet!30] (p) -- (p1) -- (p2) -- (p3) -- (p4) -- cycle;
\draw(p) -- (p1) -- (p2) -- (p3) -- (p5) -- (p4) -- cycle;
	\draw[thick, BlueViolet] (p1) -- (p2) -- (p3) -- (p4);
    \draw \foreach \x/\y in {p/p2, p/p3} {(\x)--(\y)};
	   \draw (p) node[right]{$p$};
    \draw (p1) node[above]{$1$};
    \draw (p1) node[BlueViolet]{$\bullet$};
    \draw (p2) node[left]{$2$};
    \draw (p2) node[BlueViolet]{$\bullet$};
    \draw (p3) node[left]{$3$};
    \draw (p3) node[BlueViolet]{$\bullet$};
    \draw (p5) node[below]{$4$};
    \draw (p4) node [below]{$5$};
    \draw (p4) node[BlueViolet]{$\bullet$};
	&&
	
	\draw (q1)+(0,0.7) node[above]{$T_{t_1}$};
    \fill[color=BlueViolet!30] (p) -- (p1) -- (p2) -- (p3) -- (p4) -- cycle;
    \draw [thick, BlueViolet] (p1) -- (p2) -- (p3) -- (p4);
    \triang{2, 3, 4}{}{1};&

    \draw (q1)+(0,0.7) node[above]{$T_{t_2}$};
    \fill[color=BlueViolet!30] (p) -- (p2) -- (p3) -- (p4) -- cycle;
    \draw [thick, BlueViolet] (p2) -- (p3) -- (p4);
    \fill[color=PineGreen!30] (p') -- (p1) -- (p2) -- cycle;
    \draw [thick, PineGreen] (p1) -- (p2);
    \triang{3, 4}{1}{2}; &

    \draw (q1)+(0,0.7) node[above]{$T_{t_3}$};
    \fill[color=BlueViolet!30] (p) -- (p3) -- (p4) -- cycle;
    \draw [thick, BlueViolet] (p3) -- (p4);
    \fill[color=PineGreen!30] (p') -- (p1) -- (p2) -- (p3) -- cycle;
    \draw [thick, PineGreen] (p1) -- (p2) -- (p3);
    \triang{4}{1, 2}{3}; &
    
    \draw (q1)+(0,0.7) node[above]{$T_{t_4}$};
    \fill[color=PineGreen!30] (p') -- (p1) -- (p2) -- (p3) -- (p4)-- cycle;
    \draw [thick, PineGreen] (p1) -- (p2) -- (p3) -- (p4);
    \triang{}{1, 2, 3}{4};\\   
    \draw (q1)+(0,0.2) node[above]{$T/p$};
	\draw [BlueViolet] \foreach \x in {1,2,3,5} {(q\x) node{$\bullet$}};
	\draw \foreach \x in {1,2,3,5} {(q\x) node[left]{$\x$}};
	\draw [thick,BlueViolet] \foreach \x in {1,2,3,5} \foreach \y in {1,2,3,5} {(q\x) -- (q\y)};&&
    \Lt{1, 2, 3, 5}{1}{1}; &
    \Lt{2, 3, 5}{1, 2}{2}; &
    \Lt{3, 5}{1, 2, 3}{3}; &
    \Lt{5}{1, 2, 3, 5}{4};\\
};

\end{tikzpicture}
\caption{A two-dimensional illustration of the proof of Lemma~\ref{lem:inductionstep}. In the left picture, a triangulation of a hexagon. In the rest, the vertex $p$ is split into $\{p,p'\}$, and increasing values of the parameter~$t$ induce different good triangulations of the heptagon $P\cup P'$.
}\label{fig:proof_inductionstep}
\end{figure}

\begin{example}\label{ex:movingtriangulation}
 To illustrate Lemma~\ref{lem:inductionstep}, we use a two-dimensional example (reminiscent of some classical proofs of the recurrence relation for Catalan numbers). It has the advantage of clarity, as it can be easily depicted, see Figure~\ref{fig:proof_inductionstep}. 
 
 When a point $p$ is split into $\{p,p'\}$, the facets of the polytope that were incident to~$p$ are divided into two families, those that remain facets after the splitting, and those that replace $p$ by $p'$. Moreover, new facets containing both $p$ and $p'$ are created. Similarly, the cells incident to $p$ in a triangulation are divided into two families, those containing $p$ and those containing  $p'$, and new cells containing  both $p$ and $p'$ are created. This can be read in the link $T/p$, which is divided into two parts, $L_t$ and $L'_t$, without full dimensional intersection: We have that $F\cup \{p\}\in T_t$ whenever $F\in L_t$, $F\cup \{p'\}\in T_t$ whenever $F\in L'_t$, and that $F\cup \{p,p'\}\in T_t$ whenever $F\in L_t\cap L'_t$.
 
 When $t=-\infty$, we have that $L_t\cap L'_t=L'_t$, and it coincides with the boundary faces of $T/p$ that are incident to~$p'$ in $\conv(P\cup\{p'\})$. As the values of $t$ increase,  $L_t\cap L'_t$ flips successively through each of the simplices of $T/p$, giving rise to different triangulations. At the end, when $t=\infty$, we have that $L_t\cap L'_t=L_t$, and it coincides with the boundary faces of $T/p$ that are incident to~$p$ in $\conv(P\cup\{p'\})$. The number of different triangulations thus created is therefore one more than the number of cells in the link $T/p$.
 
There are some important differences that only appear in higher dimensions. First of all, all triangulations of a polygon are regular, while starting in dimension 3 there are polytopes with non-regular triangulations. Moreover, in dimension two there is only one way to go from $T_{-\infty}$ to $T_{\infty}$ since the link $T/p$ is one-dimensional, while in higher dimensions there are usually several different paths between these two triangulations. The $\size{\tr/\p}+1$ triangulations $T_t$ that we see as $t$ ranges from $-\infty$ to $\infty$ will depend on the relative position of $p$ and $p'$ and the choice of the lifting vector~$w$. Finally, in a polygon, the vertex figure is just a segment (with interior points) and the number $C$ in the statement is always $1$, so the lemma does not give an interesting bound in that case. 
\end{example}

Without any further constraint this lemma is not very useful, as $\conv(\pc/\p)$ could be a simplex and $C=1$. However, a lower bound on $C$  can be proved if we have knowledge on the neighborliness of $\pc/\p$, thanks to the following lemma. 

Recall that for a pure $d$-dimensional simplicial complex $\cC$ and $0\leq j \leq d+1$ we denote \[h_j(\cC)=\sum_{k=0}^j (-1)^{j-k} \binom{d+1-k}{d+1-j} f_{k-1}(\cC),\] where $f_k(\cC)$ is the number of faces of $\cC$ of dimension $k$. The numbers $h_0(\cC), \dots, h_{d+1}(\cC)$, collectively called the \defn{$h$-vector} of $\cC$, are known to be nonnegative in certain special cases, which include $\cC$ being a topological sphere; see~\cite[Chapter 8]{Zie95}.

\begin{lemma}\label{lem:cells_triang_neighb}
Let $d>2$ and $1\leq k \leq d+1$. 
Let $Q$ be a $d$-dimensional simplicial polytope on $n$ vertices. 
Then the number of cells in any triangulation of $Q$ is bounded from below by $h_k(\partial Q)$.

In particular, if $Q$ is $k$-neighborly for $1\leq k \leq \floor{\frac{d}{2}}$, then this number is bounded by:
\[h_k(\partial Q) =\binom{n-d-1+k}{k}.\]
\end{lemma}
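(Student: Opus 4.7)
The plan is to combine the Cohen--Macaulay property of the triangulation, viewed as an abstract simplicial ball, with a rigid combinatorial consequence of $k$-neighborliness. First I would observe that, since every vertex of $Q$ is extreme, any triangulation $T$ of $Q$ uses exactly the $n$ vertices of $Q$; and because $Q$ is simplicial, every facet of $\partial Q$ is a $(d-1)$-simplex on $d$ of these vertices, so $T$ restricted to such a facet is the facet itself. Thus $\partial Q$ sits as a subcomplex of $T$, and $f_{j-1}(T)\ge f_{j-1}(\partial Q)$ for all $j$. Since $T$ is a $d$-ball it is Cohen--Macaulay, hence $h_i(T)\ge 0$ for all $i$ and
\[
f_d(T)=\sum_{i=0}^{d+1}h_i(T)\ \ge\ \sum_{i=0}^{k}h_i(T).
\]

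The key computational step I would establish is the partial-sum identity
\[
\sum_{i=0}^{k}h_i(\Delta)=\sum_{j=0}^{k}(-1)^{k-j}\binom{d-j}{k-j}f_{j-1}(\Delta),
\]
obtained by swapping the order of summation in the defining formula for $h_i$ and applying the finite-alternating-sum identity $\sum_{l=0}^m(-1)^l\binom{r}{l}=(-1)^m\binom{r-1}{m}$. The same coefficient $\binom{d-j}{k-j}$ appears whether $\Delta$ is viewed as a $d$-complex (for $T$, with $r=d+1$) or as a $(d-1)$-complex (for $\partial Q$, with $r=d$; in this case the partial sum equals $h_k(\partial Q)$ itself). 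The Dehn--Sommerville symmetry $h_k(\partial Q)=h_{d-k}(\partial Q)$ lets me restrict to $k\le \ffloor{d}{2}$, and $k=d+1$ is vacuous since $h_{d+1}(\partial Q)=0$.

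Next I would invoke $k$-neighborliness: $f_{j-1}(\partial Q)=\binom{n}{j}$ for $j\le k$, and combined with $\partial Q\subseteq T$ and the trivial upper bound $f_{j-1}(T)\le \binom{n}{j}$, this forces $f_{j-1}(T)=\binom{n}{j}$ for every $j\le k$. The two partial-sum identities then agree term by term, yielding the \emph{exact} equality $\sum_{i=0}^{k}h_i(T)=h_k(\partial Q)$ and hence $f_d(T)\ge h_k(\partial Q)$. The explicit formula $h_k(\partial Q)=\binom{n-d-1+k}{k}$ then follows from the Vandermonde-type identity $\sum_{j=0}^{k}(-1)^{k-j}\binom{d-j}{k-j}\binom{n}{j}=\binom{n-d-1+k}{k}$, easily verified by induction on $k$.

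The main obstacle I anticipate concerns the first sentence of the lemma, phrased for arbitrary simplicial $Q$ without the neighborliness assumption. For such $Q$ the term-by-term comparison breaks down, because the interior contributions $f_{j-1}(T)-f_{j-1}(\partial Q)$ enter the alternating sum with mixed signs; handling this full generality would likely require either an inductive link-of-an-interior-face argument or a stronger inequality such as $h_i(T)\ge h_i(\partial Q)-h_{i-1}(\partial Q)$ for $i\le d/2$, coming from the hard Lefschetz theorem (equivalently the $g$-theorem) for polytopal spheres via the surjection of Artinian reductions of Stanley--Reisner rings. Only the $k$-neighborly case is invoked in Lemma~\ref{lem:inductionstep}, however, so the direct argument above suffices for all applications in the paper.
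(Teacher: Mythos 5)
You correctly establish the bound in the $k$-neighborly case, and your route is genuinely different from the paper's: you exploit the containment $\partial Q\subseteq T$ and the trivial upper bound $f_{j-1}(T)\le\binom{n}{j}$ to force $f_{j-1}(T)=f_{j-1}(\partial Q)$ for $j\le k$, and then match the coefficients in the partial-sum formula for $\sum_{i\le k}h_i(T)$ against those in the formula for the single value $h_k(\partial Q)$ (the coefficient $\binom{d-j}{k-j}$ coincides with $\binom{d-j}{d-k}$, as you say). Both proofs ultimately rest on nonnegativity of the $h$-vector of the Cohen--Macaulay ball $T$. Since only the neighborly case is invoked in the rest of the paper, your argument suffices for all applications here.

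Where you flag a gap, however, you overshoot: the first sentence of the lemma, for arbitrary simplicial $Q$, does not require hard Lefschetz or the $g$-theorem. The paper uses the McMullen--Walkup relation (\cite[Thm.~2]{MW71}, see also \cite[Thm.~2.6.11]{DRS10}), which is an elementary Dehn--Sommerville--type identity for triangulated balls: for a triangulation $T$ of the $d$-ball with boundary $\partial Q$ one has, for $0\le j\le d$,
\[
h_j(\partial Q)-h_{j-1}(\partial Q)\;=\;h_j(T)-h_{d+1-j}(T).
\]
Combined with $h_{d+1-j}(T)\ge 0$, this gives exactly the inequality you speculated about, $h_j(T)\ge h_j(\partial Q)-h_{j-1}(\partial Q)$, with no extra machinery. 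Summing over $j=0,\dots,k$ telescopes the right-hand side to $h_k(\partial Q)$, and adding the remaining nonnegative $h_i(T)$ for $i>k$ yields $f_d(T)\ge h_k(\partial Q)$ for every simplicial $Q$ and every $k$. Your term-by-term $f$-vector matching is thus a special instance that becomes available once neighborliness pins down the low-degree $f$-numbers; the McMullen--Walkup identity handles the general statement without that crutch.
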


 \begin{proof}
 Let $T$ be a triangulation of $Q$. 
 We want a bound on $f_{d}(T)$.
 
 We use the following result from McMullen and Walkup \cite[Thm. 2]{MW71}, cited in a modern version in \cite[Thm. 2.6.11]{DRS10}.
 For any $0\leq j \leq d$, 
 \[h_j(\partial Q)-h_{j-1}(\partial Q) = h_j(T)-h_{d+1-j}(T),
 \]
 where $\partial Q$ is the boundary simplicial complex of $Q$, of dimension $d-1$, and we take $h_{-1}(\partial Q)=0$.
 
 Then we have:
 \begin{align*}
     f_{d}(T) &= \sum_{l=0}^{d+1}h_l(T)\\
     &= h_k(\partial Q)-h_{-1}(\partial Q) +\sum_{j=0}^k h_{d+1-j}(T)+\sum_{l=k+1}^{d+1} h_l(T)\\
     &\geq h_k(\partial Q).
 \end{align*}
 
 The first $h$-coefficients of neighborly polytopes are well known, as they achieve the maximum allowed by the Upper Bound Theorem (\cite[Lemma 2]{McM70}, see also \cite[Lemma 8.26]{Zie95}).
 In particular we have:
 \[
 h_k(\partial Q) = \binom{n-d-1+k}{k}.
 \]
  \end{proof}

\begin{remark}
From the proof one derives that for $1\leq k \leq \floor{\frac{d}{2}}$, a triangulation $T$ has exactly $h_{k}(\partial(Q))$ cells if, and only if, $h_j(T)=0$ for every $j\ge  k+1$. 
This, in turn, is equivalent to all interior cells of $T$  having dimension at least $d-k$. 
\end{remark}

As a consequence of the previous two lemmas we have:

\begin{theorem}\label{thm:induction_regtriang}
Let $P=(p_1, \ldots, p_{n-1}, q)$ be a configuration of $n$ points in very general convex position in $\RR^d$ such that: 
\begin{enumerate}[(i)]
\item for every $d+1\leq i \leq n-1$, $p_{i}$ and $q$ are triangulation-inseparable in $P_i:=(p_1, \ldots, p_i, q)$, and
\item the point configuration $P/q$ is $k$-neighborly.
\end{enumerate}
Then \[|\regtriang{P}| \geq \prod_{m=d}^{n-1} \binom{m-d+k}{k},\]
which is of order $(n!)^{k\pm o(1)}$ for fixed~$k$ and~$d$.

\end{theorem}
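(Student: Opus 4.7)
The plan is to induct on $i$ from $d$ to $n-1$, establishing a lower bound on $|\regtriang{P_i}|$ at each stage; the case $i=n-1$ will then deliver the theorem since $P = P_{n-1}$. The base case $i = d$ is immediate: $P_d$ consists of $d+1$ points of $P$ in convex position in $\RR^d$, so it is a $d$-simplex whose unique triangulation matches the empty product, which equals $1$.

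For the inductive step from $P_i$ to $P_{i+1}$, I would apply Lemma~\ref{lem:inductionstep} with $\pc = P_i$, $\p = q$ and $\p' = p_{i+1}$. Hypothesis~(i) at index $i+1$ guarantees the triangulation-inseparability of $q$ and $p_{i+1}$ in $P_{i+1}$ required by the lemma, and we obtain
\[
|\regtriang{P_{i+1}}| \geq |\regtriang{P_i}| \cdot (C_i + 1),
\]
where $C_i$ denotes the minimum number of cells over all regular triangulations of the contraction $P_i/q$. To lower-bound $C_i$ I would apply Lemma~\ref{lem:cells_triang_neighb} to $\conv(P_i/q)$, which is a $(d-1)$-dimensional polytope on $i$ vertices. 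The key observation is that $P_i/q$ inherits $k$-neighborliness from $P/q$: any $k$-subset of $P_i/q$ is a $k$-subset of $P/q$, so by hypothesis~(ii) it is the vertex set of a face of $\conv(P/q)$, and the same supporting linear functional exposes it as a face of the sub-polytope $\conv(P_i/q)$. Lemma~\ref{lem:cells_triang_neighb} (with the roles of $d$ and $n$ played by $d-1$ and $i$) then yields
\[
C_i \geq \binom{i-d+k}{k}.
\]
Iterating the inductive step multiplies the lower bound by at least $\binom{i-d+k}{k}$ at each stage for $i = d, d+1, \ldots, n-2$, producing the advertised product up to a polynomial factor that is absorbed in the asymptotic.

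The asymptotic $(n!)^{k\pm o(1)}$ then follows from a routine Stirling estimate: writing $N = n-d$, the logarithm of $\prod_{j=0}^{N-1} \binom{j+k}{k} = \prod_{j=0}^{N-1} \tfrac{(j+k)!}{k!\, j!}$ equals $k N \log N + O(N)$. The main technical point to watch is verifying, at each induction step, that $\conv(P_i/q)$ is simplicial (a hypothesis of Lemma~\ref{lem:cells_triang_neighb}) and that $q$ is in very general position with respect to $P_i\setminus\{q\}$ (required for Lemma~\ref{lem:inductionstep}); both should follow from the global genericity bundled into the convex-position and triangulation-inseparability assumptions, since taking sub-configurations and passing to contractions preserve these properties for generic configurations.
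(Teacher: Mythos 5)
Your proposal follows essentially the same route as the paper: induct on the number of points, apply \cref{lem:inductionstep} at each step with $\p = q$, bound the factor $C$ via \cref{lem:cells_triang_neighb} after observing that the contraction inherits $k$-neighborliness (a subconfiguration of a $k$-neighborly configuration is $k$-neighborly), and finish with a Stirling estimate. One small bookkeeping remark: applying \cref{lem:inductionstep} to the step $P_i\to P_{i+1}$ gives a factor $C_i+1$ with $C_i$ the minimum cell count over regular triangulations of $P_i/q$, a $(d-1)$-dimensional configuration on $i$ vertices, so $C_i\ge\binom{i-d+k}{k}$ for $i=d,\dots,n-2$; the resulting product $\prod_{i=d}^{n-2}\bigl(\binom{i-d+k}{k}+1\bigr)$ is off by a single polynomial factor from the displayed $\prod_{m=d}^{n-1}\binom{m-d+k}{k}$, which does not affect the claimed $(n!)^{k\pm o(1)}$ asymptotics. (The paper's own write-up indexes $C_m$ by $P_m/q$ at the step $P_{m-1}\to P_m$, so the same caveat applies there.) Your closing observations about simpliciality of $\conv(P_i/q)$ and the very-general-position hypothesis needed for \cref{lem:inductionstep} are also the right points to watch; they are indeed implicit in the way the theorem is used in the paper.
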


\begin{proof}
For $k=0$ the statement is void, therefore we assume that $k\geq 1$. We proceed by induction on $n$. In the base case $n=d+1$ we have that  
$\pc=\pc_d$ is a simplex, with only one regular triangulation, so the result is trivial.

Assume that the theorem is true for $n=m-1$. Note that $P_{m-1}$ satisfies the hypotheses of the theorem. Indeed, the first condition is automatic and the second follows because~$\pc_{m} /q$ is a subset of~$\pc/q$, and a subset of a $k$-neighborly point configuration is still $k$-neighborly.

Now, since $\p_{m}$ and $\p[q]$ are triangulation-inseparable in $\pc_{m}$ by the first hypothesis, we can apply Lemma~\ref{lem:inductionstep} to deduce that 
\[|\regtriang{\pc_{m}}| \geq |\regtriang{\pc_{m-1}}|\times (C_m+1),\]
where $C_m$ is the minimum number of cells in a regular triangulation of $\pc_{m} /q=(\pc /q)\setminus\{\p_{m+1}, \ldots, \p_{n-1}\}$. 
And since $\pc_{m} /q$ is a $k$-neighborly $(d-1)$-dimensional simplicial polytope on $m$ vertices (all points are vertices since it is at least $1$-neighborly), Lemma~\ref{lem:cells_triang_neighb} implies that $C_m\geq \binom{m-d+k}{k}$. 

At the end, using the induction hypothesis we conclude that:
\begin{align*}
|\regtriang{\pc}|
&\geq \prod_{m=d}^{n-1} \binom{m-d+k}{k}\\
&\geq \prod_{m=d}^{n-1} \left( \frac{m-d+k}{k}\right)^{k}\\
&= \left((n-d-1+k)!\right)^{k}\times \frac{1}{\left((k-1)! k^{(n-d)}\right)^{k}} \\
&= \exp(kn\log n + o(n \log n)).
\qedhere
\end{align*}
\end{proof}

The combination of these results provides Theorem~\ref{thm:manytriangulations}: a lower bound of order $(n!)^{\ffloor{d-1}{2} \pm o(1)}$ for the number of regular triangulations of cyclic polytopes in certain realizations. 
Recall that the \defn{cyclic $d$-polytope} with $n$ vertices is a neighborly simplicial polytope that can be realized as the convex hull of $n$ arbitrary points $p_1,\dots,p_n$ along the moment curve $\set{(t,t^2,\dots,t^d)\in \RR^d}{t\in \RR}$. See for example \cite{Zie95} for details.

\begin{proof}[Proof of Theorem \ref{thm:manytriangulations}]
We first fix the last vertex $q=p_n$ on the moment curve and then define the points $p_1, \ldots, p_{n-1}$ consecutively. 
At step $i$, we slide the point $p_i$ along the moment curve until it is close enough to $p_n$ so that Lemma~\ref{lem:triang_insep} implies them to be triangulation-inseparable, after a perturbation of $p_i$ into very general position if needed. For $d\geq 3$, the contraction of the last vertex in a cyclic polytope with~$n$ vertices is a $(d-1)$-dimensional cyclic polytope with~$n-1$ vertices, and in particular $\ffloor{d-1}{2}$-neighborly. Hence Theorem~\ref{thm:induction_regtriang} gives the result.
\end{proof}

It is not clear to us whether cyclic polytopes (or neighborly polytopes in general) do indeed have more triangulations than ``typical'' simplicial polytopes of the same dimension and number of vertices. In fact, in dimension two quite the opposite is true: the convex $n$-gon minimizes the number of triangulations and of regular triangulations among point configurations of $n$ points in general position~\cite{KupavskiiVolostnovYarovikov21,GonzalezSantos21}.

\section{Many polytopes}\label{sec:manypolytopes}

Let us call \defn{polytopal (simplicial) $d$-ball} any (labeled) simplicial complex that can be realized as a regular triangulation of a configuration of points in dimension $d$. By adding a point ``at infinity'' to a polytopal $d$-ball one obtains a polytopal $d$-sphere with one more vertex, and viceversa. Thus,
the number of combinatorially different labeled polytopal $d$-balls with $n$ vertices coincides with the number of combinatorially different labeled simplicial $(d+1)$-polytopes with $n+1$ vertices.

On the other hand, if two simplicial polytopes are combinatorially different  then no triangulation of the first can be combinatorially equal to one of the second, because we can recover the boundary complex of a simplicial polytope from any of its triangulations. Hence:

\begin{lemma}
\label{lem:triangs_to_polytopes}
If $\pc_1, \dots, \pc_N$ are configurations of dimension $d$ and size $n$  in convex and general position and with combinatorially different convex hulls, then there are at least
\[
\sum_{i=1}^N |\regtriang{\pc_i}|
\]
combinatorially different labeled simplicial $(d+1)$-polytopes with $n+1$ vertices.
\end{lemma}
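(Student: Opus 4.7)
My plan is to apply the correspondence recalled in the first paragraph of this section: labeled polytopal $d$-balls on vertex set $[n]$ are in bijection with labeled simplicial $(d+1)$-polytopes on vertex set $[n+1]$, by adding a single new vertex ``at infinity'' that cones over the boundary. Under this bijection, each regular triangulation of each $\pc_i$ yields a labeled simplicial $(d+1)$-polytope with $n+1$ vertices. It thus suffices to show that the disjoint union $\bigsqcup_{i=1}^N \regtriang{\pc_i}$ produces $\sum_{i=1}^N |\regtriang{\pc_i}|$ pairwise distinct labeled simplicial complexes on $[n]$.

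Within a single $\regtriang{\pc_i}$, the elements are distinct by definition (as the set $\regtriang{\pc_i}$ of regular triangulations is a set of labeled simplicial complexes). The substantive step is to rule out coincidences across different indices. For this I would invoke the observation made immediately before the lemma: from any triangulation $T$ of a simplicial $d$-polytope $Q$ on its own vertex set, one recovers the labeled boundary complex $\partial Q$ intrinsically, as the subcomplex of $T$ generated by those $(d-1)$-faces that are contained in exactly one $d$-cell of $T$. Since by hypothesis the convex hulls $\conv(\pc_i)$ are combinatorially distinct as labeled polytopes, their boundary complexes are distinct as labeled simplicial complexes on $[n]$. Consequently, any $T \in \regtriang{\pc_i}$ and $T' \in \regtriang{\pc_j}$ with $i \neq j$ must differ, because applying the combinatorial boundary-recovery to them yields different outputs.

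There is no serious obstacle here; the argument is essentially bookkeeping. The only care required is that both the point-at-infinity correspondence and the boundary-recovery procedure operate at the level of \emph{labeled} simplicial complexes — the former simply introduces the new label $n+1$ for the apex and leaves labels $1,\dots,n$ untouched, while the latter is formulated purely combinatorially. Once these are verified, summing the contributions of the individual $\pc_i$ gives the stated lower bound.
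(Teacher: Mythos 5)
Your proof is correct and follows essentially the same route as the paper: use the labeled bijection between polytopal $d$-balls on $[n]$ and simplicial $(d+1)$-polytopes on $[n+1]$ via a cone point at infinity, and rule out coincidences across different $\pc_i$ by recovering $\partial\conv(\pc_i)$ combinatorially from any of its triangulations. You merely spell out the boundary-recovery step a bit more explicitly than the paper does.
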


In this section we show that not only cyclic polytopes but all the \emph{Gale sewn} polytopes introduced in~\cite{Pad13}  fulfill (in certain realizations) the conditions of Theorem~\ref{thm:induction_regtriang}. This provides us with a large family of polytopes with many regular triangulations, to which we can apply Lemma~\ref{lem:triangs_to_polytopes} and obtain even more polytopes. 

In order to have a self-contained presentation, we give in  Section~\ref{sub:lextechnical} all the definitions and lemmas that are used in the proofs of the constructions in the Section~\ref{sub:manypolytopes}. Most of the contents of the latter can be traced back to \cite{Pad13,GP16}, but observe that the presentation in~\cite{Pad13} is formulated in the Gale dual setting of extensions while ours, and the one in \cite{GP16}, is already formulated in a primal setting of liftings.

\subsection{Lexicographic liftings}
\label{sub:lextechnical}

A central tool for our  construction are lexicographic liftings, which are a way to derive $(d+1)$-dimensional point configurations from $d$-dimensional point configurations.

\begin{definition}\label{def:lexicographiclifting}
A \defn{positive lexicographic lifting} of a point configuration $P=(p_1,\dots,p_n)\subset\RR^d$ (with respect to the order induced by the labels) is any configuration $\wh P=(\wh p_1,\dots,\wh p_n,\wh q)$ of $n+1$ labeled points in~$\RR^{d+1}$ such that:
\begin{enumerate}[(i)]
\item $\wh q$ is a point in the  halfspace $x_{d+1}>0$,
 \item for $1\leq i\leq n$, the point $\wh p_i$ lies in the half-line  from $\wh q$ through $(p_i,0)$,
 \item \label{it:hyperplanes} for $d+2\leq i\leq n$, and for every hyperplane $H$ spanned by $d+1$ points taken among $\left\{\wh p_{{1}},\dots,\wh p_{{i-1}}\right\}$, 
 the points $\wh q$ and $\wh p_i$ lie on the same side of $H$.
\end{enumerate}
\end{definition}

\begin{remark}\label{rmk:epsilon_lex_lift}
Positive lexicographic liftings exist for every point configuration, and are a special case of the lexicographic liftings produced with a sign vector in $\{+, -\}^n$, as defined e.g.\ in \cite[Def. 4.1]{GP16}.
One way to construct a positive lexicographic lifting is to choose $\wh q$ arbitrarily with $x_{d+1} >0$ and then take $\wh p_i := (1-\varepsilon_i) \wh q + \varepsilon_i (p_i,0)$ for constants $0 < \varepsilon_n \ll \varepsilon_{n-1} \ll\dots \ll  \varepsilon_1$. See Figure~\ref{fig:lifting}.
\end{remark}

\begin{figure}[ht]
\centering
\begin{subfigure}[b]{0.3\textwidth}
\includegraphics[width=.9\linewidth]{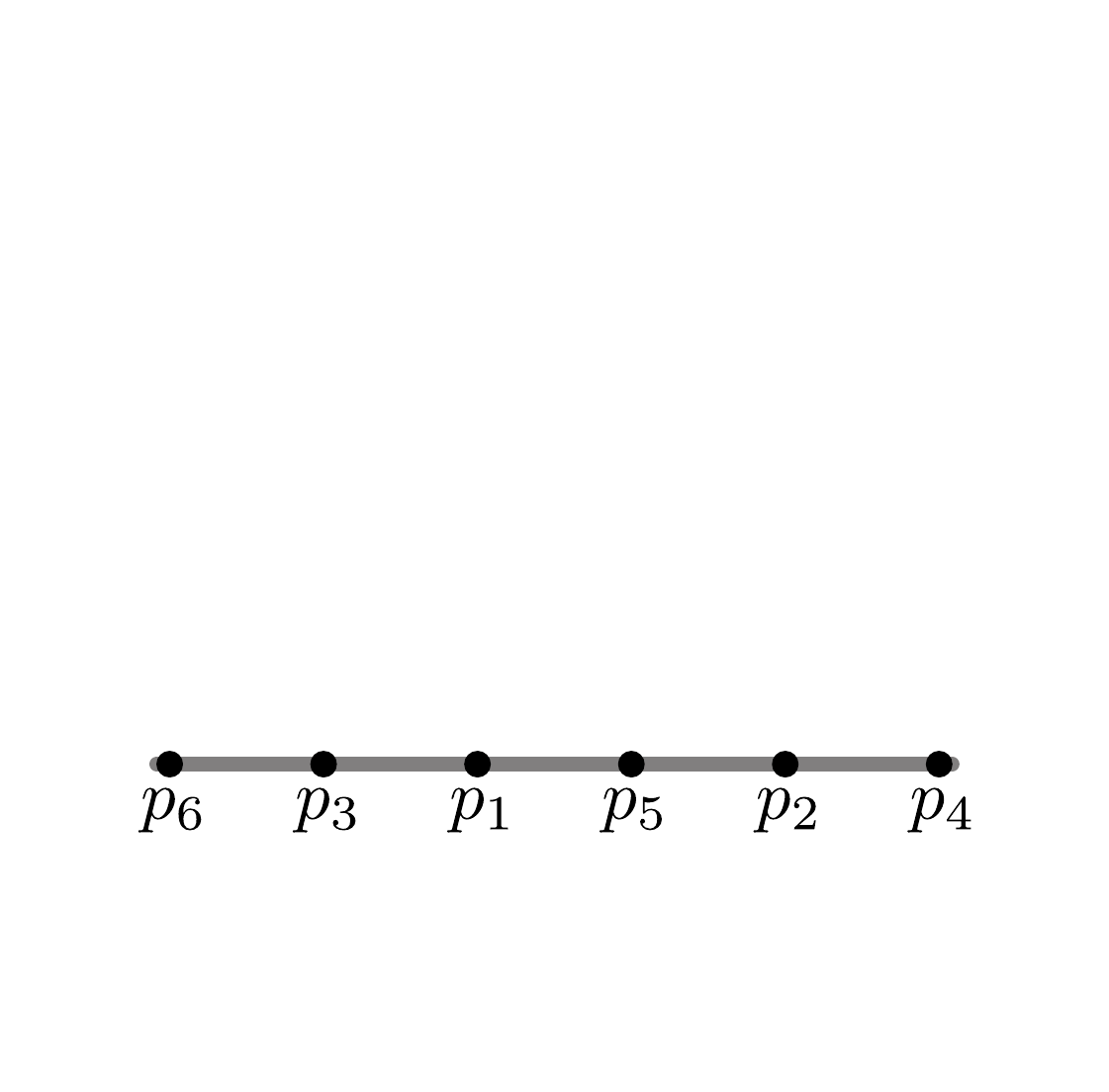}
\caption{$P$}
\label{fig:lifting1}
\end{subfigure}\quad
\begin{subfigure}[b]{0.3\textwidth}
\includegraphics[width=.9\linewidth]{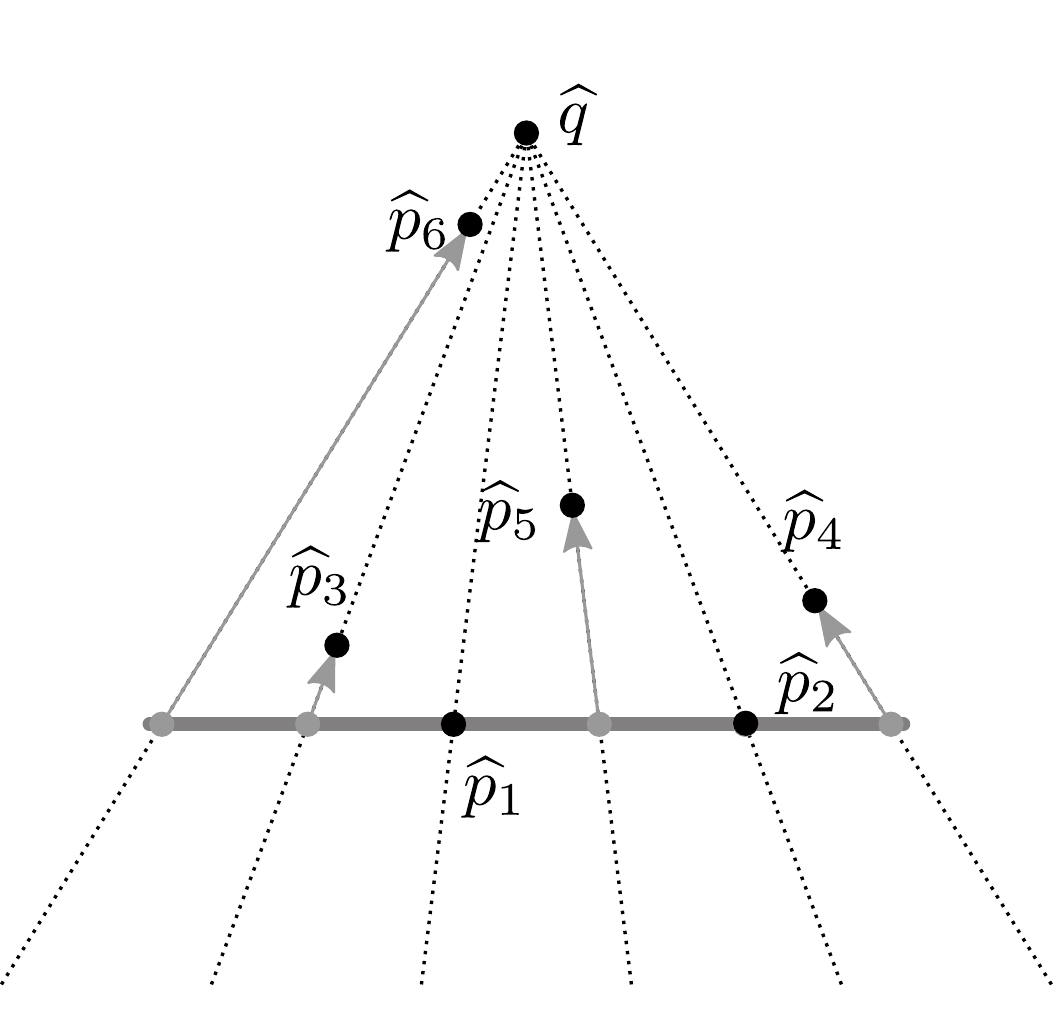}
\caption{$\wh P$}\label{fig:lifting2}
\end{subfigure}\quad
\begin{subfigure}[b]{0.3\textwidth}
\includegraphics[width=.9\linewidth]{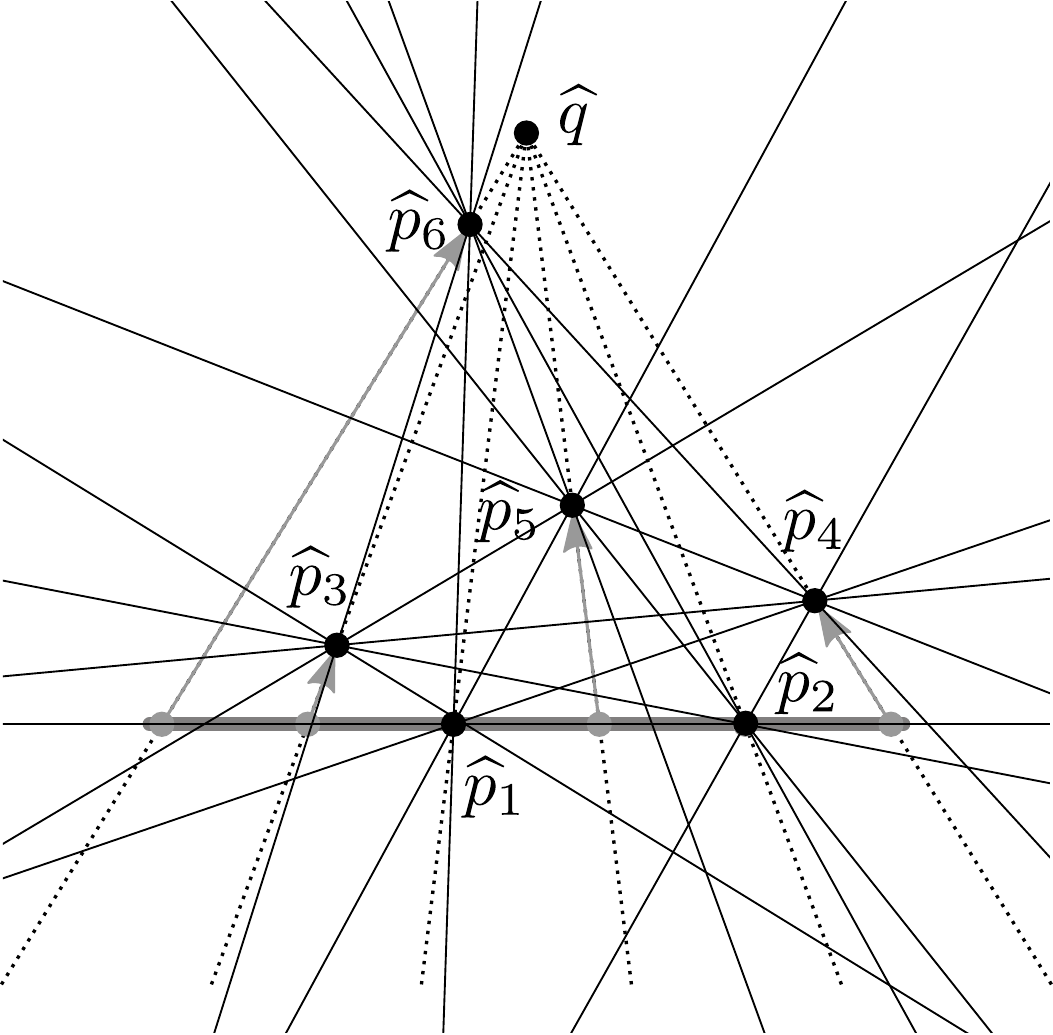}
\caption{Checking \ref{it:hyperplanes}}\label{fig:lifting3}
\end{subfigure}
\caption{A positive lexicographic lifting $\wh P\subset \RR^2$ of a configuration $P\subset \RR^1$.}
\label{fig:lifting}
\end{figure}

The faces of $\conv(\wh P)$ that do not contain $\wh q$ give a particular subdivision of $P$ that is called the \defn{placing}, or \defn{pushing}, triangulation. We refer the reader to Section 4.3.1 of \cite{DRS10} for more details.

\begin{definition}
A face $F$ of a polytope $Q$ is \defn{visible} from a point $p\in \RR^d$ if there is an affine functional that is zero on $F$, strictly positive on $p$ and strictly negative on $Q\setminus F$. 
$F$ is \defn{hidden} from $p$ if there is an affine functional that is zero on $F$ and strictly negative both on $p$ and on $Q\setminus F$. 
Note that a face that is not a facet can be both visible and hidden from $p$, and if $p$ is in general position with respect to $Q$ and $p\notin Q$, then any face of $Q$ (even facets) is either visible or hidden from $p$.

Let $\pc=(\p_1, \ldots, \p_n)$ be a point configuration in general position in $\RR^d$. We denote $P_i:=(\p_1, \ldots, \p_i)$.
The \defn{placing triangulation} $\tr_n$ of $\pc_n$ is defined iteratively by taking for $\tr_1$ the singleton $\{1\}$ and for $\tr_i$ the union of the faces of $\tr_{i-1}$ with all simplices of the form $F\cup\{i\}$ where $F$ gives a face of $\conv(\pc_{i-1})$ that is visible from $\p_i$. 
$\tr_i$ is the only triangulation of $\pc_i$ that contains $\tr_{i-1}$. 
The $\defn{pulling}$ triangulation of $\pc$ is the union of all simplices that give proper faces of $\conv(\pc)$ and all $F\cup\{n\}$ where $F \subseteq [n-1]$ gives a proper face of $\conv(\pc)$. 
(Proper faces are those different from the whole polytope).

\end{definition}

\begin{lemma}\label{lem:faces_lexlifting}
Let $\wh P=(\wh p_1,\dots,\wh p_n,\wh q)$ be a positive lexicographic lifting of the point configuration $P=(p_1,\dots,p_n)\subset\RR^d$ in convex position. 
For $i\in [n]$ we denote $P_i := (p_1, \ldots, p_i)$ and $\wh P_i := (\wh p_1, \ldots, \wh p_i)$. 
Then:
\begin{enumerate}[(i)]
\item\label{it:lexlifting_faces_hidden} The faces of $\conv(\wh P_n)$ that are hidden from $\wh q$ are exactly the liftings of faces of the placing triangulation of $P_n$.
\item \label{it:lexlifting_faces_visible} The faces of $\conv(\wh P_n)$ that are visible from $\wh q$ are exactly the liftings of faces of the pulling triangulation of $P_n$.
\item\label{it:lexlifting_faces_intermediate} For $i\in [n-1]$, the faces of $\conv(\wh P_i)$ that are hidden, resp. visible, from $\wh p_{i+1}$ coincide with the faces that are hidden, resp. visible, from $\wh q$.
\item \label{it:lexlifting_faces_q} The faces of $\conv(\wh P)$ are exactly the faces of $\conv(\wh P_n)$ that are hidden from $\wh q$, which are the liftings of faces of the placing triangulation of $P_n$, and all $\conv(\set{\wh p_i}{i\in F}\cup\{\wh q\})$ where $F$ gives a face of $\conv(P_n)$.
\end{enumerate}
\end{lemma}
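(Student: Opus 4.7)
The plan is an induction on $n$, with part (iii) serving as the key structural lemma powering the inductive steps for (i) and (ii), and part (iv) following from (i) together with the standard description of how faces behave when adjoining an exterior point.

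\textbf{Part (iii).} The key observation is that condition~(iii) in Definition~\ref{def:lexicographiclifting}, applied to the index $i{+}1$ (so requiring $i\geq d+1$), states precisely that $\wh q$ and $\wh p_{i+1}$ lie on the same side of every hyperplane spanned by $d+1$ points of $\wh P_i$. When $\conv(\wh P_i)$ is full-dimensional these are exactly its facet-hyperplanes, so $\wh q$ and $\wh p_{i+1}$ lie on the same side of every such facet-hyperplane. Since the visibility and hiddenness of an arbitrary face of a polytope from an external point in general position are determined by the sign of the supporting hyperplanes of the facets containing it (a face is visible iff some facet containing it is, and similarly for hidden), the two points see the same visible and the same hidden faces. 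When $i\leq d$, $\conv(\wh P_i)$ is a simplex of dimension at most $d$ and every face is both visible and hidden from any external point in general position, so (iii) is trivial.

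\textbf{Parts (i) and (ii).} I induct on $n$. The base case $n=1$ is immediate: $\conv(\wh P_1)=\{\wh p_1\}$, and both the placing and pulling triangulations equal $\{\{1\}\}$. For the inductive step, I apply (iii) at index $n-1$ to convert the inductive hypothesis for $\wh P_{n-1}$ about visibility/hiddenness from $\wh q$ into a statement about visibility/hiddenness from $\wh p_n$. The faces of $\conv(\wh P_{n-1})$ hidden from $\wh p_n$ persist as faces of $\conv(\wh P_n)$ and remain hidden from $\wh q$; new faces containing $\wh p_n$ are joins $\wh F\cup\{\wh p_n\}$ with $\wh F$ a face of $\conv(\wh P_{n-1})$ visible from $\wh p_n$. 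Under the central projection from $\wh q$ to the hyperplane $x_{d+1}=0$, which identifies $\wh p_j$ with $(p_j,0)$, visibility from $\wh p_n$ of a face of $\conv(\wh P_{n-1})$ corresponds to visibility from $p_n$ of its projection in $\conv(P_{n-1})$. Matching with the recursion $T_n=T_{n-1}\cup\{F\cup\{n\}:F\text{ face of }\conv(P_{n-1})\text{ visible from }p_n\}$ gives (i). Statement (ii) follows by the symmetric argument, swapping ``hidden'' with ``visible'' and placing with pulling.

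\textbf{Part (iv).} Using the parametrization in Remark~\ref{rmk:epsilon_lex_lift}, $\wh q$ has strictly greater $(d+1)$-st coordinate than every $\wh p_j$, so $\wh q\notin\conv(\wh P_n)$ and $\conv(\wh P)$ is obtained by adjoining an exterior point. By the standard structure theorem, faces of $\conv(\wh P)$ not containing $\wh q$ are exactly the faces of $\conv(\wh P_n)$ hidden from $\wh q$, which by (i) are the lifts of simplices of the placing triangulation. The faces containing $\wh q$ form the star of $\wh q$, which is the cone with apex $\wh q$ over the visibility-boundary subcomplex of $\conv(\wh P_n)$ from $\wh q$. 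Because the projection from $\wh q$ sends $\conv(\wh P_n)$ onto $\conv(P_n)$, it carries this visibility boundary bijectively onto $\partial\conv(P_n)$, producing precisely the faces $\conv(\{\wh p_i:i\in F\}\cup\{\wh q\})$ for $F$ a face of $\conv(P_n)$.

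The main obstacle I anticipate is the matching step inside the induction for (i) and (ii): rigorously justifying that the combinatorial change to the collection of hidden faces when $\wh p_n$ is inserted is in exact bijection with the combinatorial change $T_n\setminus T_{n-1}$ in the placing triangulation, via central projection from $\wh q$. Part (iii) is the tool that makes this bijection work, but keeping careful track of boundary subcomplexes versus new simplices arising from visibility is the delicate bookkeeping step.
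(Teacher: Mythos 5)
Your part (iii) matches the paper's argument exactly: condition~(iii) of Definition~\ref{def:lexicographiclifting} puts $\wh q$ and $\wh p_{i+1}$ on the same side of every facet hyperplane of $\conv(\wh P_i)$, and visibility/hiddenness of an arbitrary face reduces to that of a facet containing it. Your part (iv) is also essentially the paper's proof. Where you diverge is in parts (i) and (ii): the paper simply reduces them to \cite[Lemmas 4.3.4 and 4.3.6]{DRS10} (after a projective transformation sending $\wh q$ to vertical infinity, so that ``hidden/visible from $\wh q$'' becomes ``lower/upper face''), whereas you attempt a self-contained induction. That is a legitimate alternative route, but as written it has a genuine gap, precisely at the ``delicate bookkeeping'' step you yourself flag, and two of the intermediate claims in your inductive step are false as stated.

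First, the new faces of $\conv(\wh P_n)$ that contain $\wh p_n$ are \emph{not} the joins $\wh F\cup\{\wh p_n\}$ with $\wh F$ a face of $\conv(\wh P_{n-1})$ merely visible from $\wh p_n$; they are the joins with faces that are simultaneously visible \emph{and} hidden from $\wh p_n$ (horizon faces). A visible facet $\wh F$ of $\conv(\wh P_{n-1})$, for instance, does not give a face $\wh F\cup\{\wh p_n\}$. Second, the claim that ``visibility from $\wh p_n$ of a face of $\conv(\wh P_{n-1})$ corresponds to visibility from $p_n$ of its projection in $\conv(P_{n-1})$'' does not type-check in general: the central projection $\pi$ from $\wh q$ sends a facet of $\conv(\wh P_{n-1})$ (which is $d$-dimensional) to a full-dimensional subset of $\conv(P_{n-1})$, not to a face of $\conv(P_{n-1})$. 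What your induction actually needs, and does not establish, is that among the new facets $\wh R\cup\{\wh p_n\}$ of $\conv(\wh P_n)$ (where $\wh R$ ranges over horizon ridges of $\conv(\wh P_{n-1})$, each projecting under $\pi$ to a facet $R$ of $\conv(P_{n-1})$), exactly those with $R$ visible from $p_n$ are hidden from $\wh q$. One way to close the gap: show directly that the collection of faces of $\conv(\wh P_n)$ hidden from $\wh q$ projects under $\pi$ to a triangulation of $\conv(P_n)$, note that by induction and claim (iii) its restriction to $\conv(P_{n-1})$ is $T_{n-1}$, and then invoke uniqueness of the extension of a triangulation by a point in convex position — but this is real additional work, not a formality, and is exactly what \cite[Lemmas~4.3.4, 4.3.6]{DRS10} supply.
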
 

\begin{proof}

Items \ref{it:lexlifting_faces_hidden} and \ref{it:lexlifting_faces_visible} are reformulations of \cite[Lemma 4.3.4]{DRS10} and \cite[Lemma 4.3.6]{DRS10}, which correspond to the case where the point $\wh q$ is ``at infinity''. 
In that case, the faces of $\conv(\wh P_n)$ hidden from $\wh q$ correspond to the  lower faces of $\conv(\wh P_n)$, thus to faces of the corresponding induced regular subdivision of $P_n$. 
The faces of $\conv(P_n)$ visible from $\wh q$ correspond to the lower faces of the lifting of $P_n$ induced by the opposite (negative) heights. 
Then, in both \cite[Lemma 4.3.4]{DRS10} and \cite[Lemma 4.3.6]{DRS10} where we take the opposite heights, the condition on the constant $c_0$ and the heights amounts to asking that the lifting is a positive lexicographic lifting. 

Item \ref{it:lexlifting_faces_intermediate} follows from the definitions and the fact that a face of a polytope $Q$ is hidden, resp. visible, from a point $p$ if and only if it is contained in a facet of $Q$ that is hidden, resp. visible from $p$.

For \ref{it:lexlifting_faces_q}, notice that the faces of $\conv(\wh P)$ that do not contain $\wh q$ are exactly the faces of $\conv(\wh P_n)$ hidden from $\wh q$. The same argument as before shows that they form the placing triangulation of $\wh P_n$. If $F\subseteq [n]$ is such that $F\cup\{\wh q\}$ gives a face of $\wh P$, let $h$ be a supporting hyperplane of this face. Then the intersection of $h$ with the hyperplane $w_{d+1}=0$ is a supporting hyperplane of the face given by $F$ for $\conv(P_n)$ in~$\RR^d\times\{0\}$.
\end{proof}

\begin{corollary}\label{cor:combitypelift}
Let $P=(p_1, \dots, p_n)\subset \RR^d$ be a point configuration in convex position. 
Let $\wh P=(\wh p_1,\dots,\wh p_n,\wh p_{n+1})$
 be a positive lexicographic lifting of $P$ and let $\wwh{P}=(\wwh p_1 ,\dots,\wwh p_n,\wwh p_{n+1}, \wwh p_{n+2})$ 
be a positive lexicographic lifting of $\wh P$, with respect to the same order. Then 
the combinatorial type of $\conv(\wwh P)$ is completely determined by (the oriented matroid of) the point configuration~$P$.
\end{corollary}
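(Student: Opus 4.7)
The plan is to apply Lemma \ref{lem:faces_lexlifting}(iv) twice, first to the lex lifting $\wh P$ of $P$ and then to the lex lifting $\wwh P$ of $\wh P$, and to verify that every combinatorial ingredient that arises depends only on the oriented matroid of $P$. Applying the lemma to $\wh P$, the face lattice of $\conv(\wh P)$ decomposes into (a) lifts of the faces of the placing triangulation of $P$, and (b) joins of the apex $\wh p_{n+1}$ with the faces of $\conv(P)$. Both the placing triangulation and the face lattice of $\conv(P)$ are classical oriented-matroid invariants of $P$ (defined by iterated visibility and by chirotope sign patterns respectively), so the face lattice of $\conv(\wh P)$ is determined by the oriented matroid of $P$.

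Applying the same lemma now to $\wwh P$ as a lex lifting of $\wh P$, the face lattice of $\conv(\wwh P)$ is determined by (a) the placing triangulation of $\wh P$, and (b) the face lattice of $\conv(\wh P)$. Part (b) is already invariant under the oriented matroid of $P$ by the previous paragraph. Since a placing triangulation depends only on the oriented matroid of the underlying configuration, part (a) depends only on the oriented matroid of $\wh P$. Thus the whole statement reduces to showing that the oriented matroid of $\wh P$ is itself determined by that of $P$.

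This last reduction is the main technical point, and is essentially the uniqueness of lexicographic extensions in oriented-matroid theory. I would verify it by direct computation using the parametrization of Remark \ref{rmk:epsilon_lex_lift}: write $\wh p_i = (1-\epsilon_i)\wh q + \epsilon_i(p_i,0)$ with $0 < \epsilon_n \ll \cdots \ll \epsilon_1$, and evaluate the sign of an arbitrary chirotope value $\chi_{\wh P}(i_1,\ldots,i_{d+2})$ by multilinear expansion of its defining $(d+2)\times(d+2)$ determinant. If the apex index $n+1$ appears among the $i_j$, the expansion collapses to a single term whose sign equals, up to a factor depending only on $d$ and the ordering, that of $\chi_P$ on the remaining $d+1$ indices. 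Otherwise all indices lie in $[n]$: subtracting off the column with maximal index $m$ and expanding multilinearly, in the limit $\epsilon_m/\epsilon_{i_k}\to 0$ the dominant term is a nonzero scalar (again only dependent on $d$ and the ordering) times $\chi_P(\{i_1,\ldots,i_{d+2}\}\setminus\{m\})$. Property (iii) of Definition \ref{def:lexicographiclifting} guarantees this dominance, and if the leading chirotope vanishes one iterates to the next-largest index, exactly as in the standard lex-extension formula. In each case the sign is determined by the oriented matroid of $P$, closing the argument.
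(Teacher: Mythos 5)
Your proof is correct in spirit but takes a genuinely different route from the paper, and the route you chose is in a sense stronger and more work. The paper applies Lemma~\ref{lem:faces_lexlifting}\ref{it:lexlifting_faces_q} once, to $\wwh P$ as a lifting of $\wh P$, and then uses items~\ref{it:lexlifting_faces_hidden}--\ref{it:lexlifting_faces_intermediate} together with the iterative definition of the placing triangulation to show that the only data needed, namely the placing triangulation of $\wh P_{n+1}$ and the face lattice of $\conv(\wh P_{n+1})$, are already determined by the placing and pulling triangulations of the $P_i$, hence by the oriented matroid of~$P$. This stays entirely at the level of faces and visibility, with no chirotope computation, and is essentially a corollary of the structural lemma. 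You instead prove the stronger fact that the whole oriented matroid of $\wh P$ is determined by that of $P$, which immediately gives both ingredients as oriented-matroid invariants. That stronger fact is indeed true (it is the uniqueness of lexicographic extensions/liftings in oriented-matroid theory, and the paper implicitly invokes it by calling these ``lexicographic'' and citing \cite{GP16}), but your sketch of it has a mismatch to be careful about: the multilinear expansion with $\epsilon_m/\epsilon_{i_k}\to 0$ only directly covers the particular parametrized lifting of Remark~\ref{rmk:epsilon_lex_lift}, whereas the corollary concerns \emph{any} positive lexicographic lifting in the sense of Definition~\ref{def:lexicographiclifting}. The correct way to close this is to argue straight from condition~\ref{it:hyperplanes} of the definition: for indices $i_1<\dots<i_{d+1}<m$ in $[n]$ with the $\wh p_{i_j}$ spanning a hyperplane, that condition forces $\chi_{\wh P}(i_1,\dots,i_{d+1},m)=\chi_{\wh P}(i_1,\dots,i_{d+1},n+1)$, and the latter reduces to $\pm\chi_P(i_1,\dots,i_{d+1})$ by the apex computation; when that chirotope vanishes one descends to the next-largest index, which is exactly the lexicographic-extension recursion you allude to. You mention property~\ref{it:hyperplanes} as ``guaranteeing dominance,'' so you are aware of it, but the argument should be phrased so that it does not rely on the $\epsilon$ scales existing, since the definition does not supply them. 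With that fix your route is sound; the paper's route simply buys economy, since it needs only the weaker combinatorial consequences and gets them directly from Lemma~\ref{lem:faces_lexlifting}.
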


\begin{proof}
According to Lemma \ref{lem:faces_lexlifting} \ref{it:lexlifting_faces_q}, the faces of $\conv(\wwh P)$ are the liftings of faces of the placing triangulation of $\wh P_{n+1}$ and all $\conv(\set{\wwh p_i}{i\in F}\cup\{\wwh p_{n+2}\})$ where $F$ gives a face of $\conv(\wh P_{n+1})$. 
The definition of the placing triangulation and Lemma \ref{lem:faces_lexlifting} \ref{it:lexlifting_faces_hidden}, \ref{it:lexlifting_faces_visible}, \ref{it:lexlifting_faces_intermediate} imply that the placing triangulation of $\wh P_{n+1}$ (and thus also the faces of $\conv(\wh P_{n+1})$) is determined by the placing and pulling triangulations of the $P_i$.
\end{proof}

If one starts with a $0$-dimensional point configuration (that is a point repeated multiple times), and then perfoms a sequence of positive lexicographic liftings always with respect to the same order, then one obtains a cyclic polytope. If the order is altered at each step, then many combinatorial types of polytopes are obtained, but not necessarily neighborly.
Moreover, different lifting orders might give rise to equivalent polytopes. However, if one restricts to changing the order of the lifting only every two dimensions, then neighborliness is preserved and the combinatorial type can be controlled.
This is used in~\cite{Pad13} to construct many neighborly polytopes. The original presentation in \cite{Pad13} is in terms of lexicographic extensions of the Gale dual, but we refer to the following primal version for liftings taken from~\cite{GP16}. We repeat the main ideas of that proof for the reader's convenience.

\begin{theorem}[{\cite[Theorem 5.5(i)]{GP16}}]\label{thm:neighborlylift}
 Let $P=(p_1, \dots, p_n)\subset \RR^d$ be a $k$-neighborly point configuration in general position. Let $\wh P=(\wh p_1,\dots,\wh p_n,\wh p_{n+1})$
 be a positive lexicographic lifting of $P$ and let $\wwh{P} =(\wwh p_1,\dots,\wwh p_n,\wwh p_{n+1}, \wwh p_{n+2})$ 
be a positive lexicographic lifting of $\wh P$, with respect to the same order. Then 
$\wwh P$ is $(k+1)$-neighborly.
\end{theorem}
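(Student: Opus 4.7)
The plan is to apply Lemma~\ref{lem:faces_lexlifting}~\ref{it:lexlifting_faces_q}, viewing $\wwh P$ as a positive lex lift of $\wh P$, and check that every $(k+1)$-subset $S\subseteq[n+2]$ indexes a face of $\conv(\wwh P)$ by splitting on whether $n+2$ and/or $n+1$ belong to $S$. As a preliminary, I would first prove that a single positive lex lift preserves $k$-neighborliness, so that $\wh P$ is already $k$-neighborly: every simplicial face $F$ of $\conv(P_n)$ is automatically a simplex of its placing triangulation, since if $i=\max F$ then $F\setminus\{i\}$ is a face of $\conv(P_{i-1})$ visible from $p_i$ (a standard consequence of $F$ being a face of $\conv(P_i)$ that contains $p_i$). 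Lemma~\ref{lem:faces_lexlifting}~\ref{it:lexlifting_faces_q} then transports $k$-neighborliness from $P$ to $\wh P$ (subsets containing $\wh p_{n+1}$ reduce via \ref{it:lexlifting_faces_q} to $(k-1)$-subsets of $\conv(P_n)$).

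When $n+2\in S$, Lemma~\ref{lem:faces_lexlifting}~\ref{it:lexlifting_faces_q} reduces the claim to $S\setminus\{n+2\}$ (of size $k$) indexing a face of $\conv(\wh P)$, which is the preliminary. When $n+2\notin S$ but $n+1\in S$, the same lemma says that $S$ must lie in the placing triangulation of $\wh P$, which since $\max S=n+1$ boils down to $S\setminus\{n+1\}$ being a face of $\conv(\wh P_n)$ visible from $\wh q$; by Lemma~\ref{lem:faces_lexlifting}~\ref{it:lexlifting_faces_visible} such faces are exactly simplices of the pulling triangulation of $P_n$, and the $k$-subset $S\setminus\{n+1\}$ belongs there because $P$ is $k$-neighborly.

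The critical case is when $n+1,n+2\notin S$. Setting $i=\max S\le n$, the condition becomes that $S\setminus\{i\}$ (of size $k$) indexes a face of $\conv(\wh P_{i-1})$ visible from $\wh p_i$, which by Lemma~\ref{lem:faces_lexlifting}~\ref{it:lexlifting_faces_intermediate} is the same as visibility from $\wh q$. The key observation is that the truncated configuration $(\wh p_1,\dots,\wh p_{i-1},\wh q)$ is itself a positive lex lift of $P_{i-1}$: the three axioms of Definition~\ref{def:lexicographiclifting} are inherited directly from $\wh P$. This allows Lemma~\ref{lem:faces_lexlifting}~\ref{it:lexlifting_faces_visible} to be re-applied, showing that faces of $\conv(\wh P_{i-1})$ visible from $\wh q$ correspond to simplices of the pulling triangulation of $P_{i-1}$. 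Since $P_{i-1}$ inherits $k$-neighborliness from $P$, the $k$-subset $S\setminus\{i\}$ is a simplicial face of $\conv(P_{i-1})$, and hence lies in that pulling triangulation.

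The main obstacle is this last case: the essential insight is that asking both lifts to use the same order is exactly what guarantees that every truncation $(\wh p_1,\dots,\wh p_{i-1},\wh q)$ still satisfies the axioms of a positive lex lift, so that Lemma~\ref{lem:faces_lexlifting} can be invoked recursively at every intermediate scale. This self-similarity is what propagates neighborliness and allows it to jump from $k$ to $k+1$ after two successive lifts in the same order.
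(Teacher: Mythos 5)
Your proof is correct and rests on the same decomposition by membership of $n+2$ and $n+1$ in the index set and on the same key tool, Lemma~\ref{lem:faces_lexlifting}; the genuine difference is in how the residual case $S\subseteq[n]$ is treated. The paper handles the sub-case $n\in S$ explicitly and then appeals to induction on $n$ for $n\notin S$; unwinding that induction quietly uses the fact that a face of the smaller double lift $\conv(\wwh P\setminus\{\wwh p_n\})$ remains a face of $\conv(\wwh P)$, a transfer the paper leaves implicit. You sidestep this by working directly at scale $i=\max S$: since the placing triangulation of $\wh P$ is built iteratively, membership of $S$ depends only on $\wh p_1,\dots,\wh p_i$, and your observation that $(\wh p_1,\dots,\wh p_{i-1},\wh q)$ is again a positive lexicographic lifting of $P_{i-1}$ (the axioms of Definition~\ref{def:lexicographiclifting} are prefix-closed) lets you re-apply Lemma~\ref{lem:faces_lexlifting}~\ref{it:lexlifting_faces_intermediate} and \ref{it:lexlifting_faces_visible} at that level. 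This is precisely what the paper's induction encodes, now made explicit; it buys a cleaner, non-inductive argument and isolates exactly why the two liftings must use the same order. One minor inefficiency: to show $\wh P$ is $k$-neighborly you argue via a visibility fact that faces of $\conv(P_n)$ lie in the placing triangulation, but since the points are in convex and general position every proper face of $\conv(P_n)$ is a simplex and lies in \emph{every} triangulation of $P_n$, so that step is automatic.
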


\begin{proof}
Let $S$ be a subset of $[n]$ of size $k$ or $k-1$. Then $\set{p_i}{i\in S}$ is the vertex set of a face of $\conv(P)$. Hence, it follows from Lemma~\ref{lem:faces_lexlifting} \ref{it:lexlifting_faces_q} that $\set{\wh p_i}{i\in S}\cup\{\wh p_{n+1}\}$ is the vertex set of a face of $\conv(\wh P)$.
The same reasoning shows that any subset of $\wwh P$ of size $k+1$ that contains $\wwh p_{n+2}$ or $\wwh p_{n+1}$ is the vertex set of a face of $\wwh P$. 

For the remaining cases, let $S$ be a subset of $[n]$ of size $k+1$. We want to show that $\set{\wwh p_i}{i\in S}$ is the vertex set of a face of $\conv(\wwh P)$. 
Let $m\leq n$ be the largest element of~$S$. Denote $P_m=(p_1, \dots, p_m)$ and $\wh P_m=(\wh p_1,\dots,\wh p_m)$. We have that $S\setminus\{m\}$ gives a face of $\conv(P_{m-1})$ by neighborliness, thus~$S\setminus\{m\}$ gives a face of the pulling triangulation of~$P_{m-1}$, thus $S\setminus\{m\}$ gives a face of~$\wh P_{m-1}$ visible from $\wh p_{m}$ by Lemma~\ref{lem:faces_lexlifting} \ref{it:lexlifting_faces_visible}, thus~$S$ gives a face of the placing triangulation of~$\wh P_m$, and thus~$S$ gives a face of the placing triangulation of~$\wh P$. It follows from Lemma \ref{lem:faces_lexlifting} \ref{it:lexlifting_faces_q} that~$S$ gives a face of~$\wwh P$. 
\end{proof}

The following lemma allows us to prove that the combinatorial type can be controlled without explicitly using the rigidity of neighborly oriented matroids of odd rank as it was originally done in \cite[Proposition~6.7]{Pad13}.

\begin{lemma}\label{lem:typecontrolled}
 Let $P=(p_1, \dots, p_{n})$ be an $r$-neighborly point configuration in even dimension $d=2r$ such that $n>d+2$. Let $\wh P=(\wh p_1,\dots,\wh p_{n},\wh p_{n+1})$ be a lexicographic lifting of $P$ and let $\wwh{P}=(\wwh p_1,\dots,\wwh{p}_{n},\wwh p_{n+1}, \wwh p_{n+2})$ be a positive lexicographic lifting of~$\wh P$, with respect to the same order. 
Then $n$ is the only index $k\in [n]$ such that the double contraction $\wwh{P}/\{\wwh p_{n+1}, \wwh p_k\}$ is $r$-neighborly.
\end{lemma}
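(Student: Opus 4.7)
The plan is to express $r$-neighborliness of $\wwh P/\{\wwh p_{n+1},\wwh p_k\}$ as the condition that every $(r+2)$-subset of $\wwh P$ containing $\{\wwh p_{n+1},\wwh p_k\}$ is a face of $\conv(\wwh P)$, and to dissect such subsets using Lemma~\ref{lem:faces_lexlifting}. Part \ref{it:lexlifting_faces_q} splits them according to whether they contain $\wwh p_{n+2}$. Those that do are of the form $\{\wwh p_{n+1},\wwh p_k,\wwh p_{n+2}\}\cup S'$ with $|S'|=r-1$; applying part \ref{it:lexlifting_faces_q} twice (first to $\wwh P$ as a lifting of $\wh P$, then to $\wh P$ as a lifting of $P$) shows that these are faces of $\conv(\wwh P)$ exactly when $\{p_k\}\cup S'$ is a face of $\conv(P)$, which holds for every $k$ by the $r$-neighborliness of $P$.

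Subsets avoiding $\wwh p_{n+2}$ lift, by part \ref{it:lexlifting_faces_hidden}, to faces of the placing triangulation of $\wh P_{n+1}$. When $\wh p_{n+1}$ is placed last, part \ref{it:lexlifting_faces_visible} together with the hyperplane condition \ref{it:hyperplanes} of Definition~\ref{def:lexicographiclifting} identifies the visible facets of $\conv(\wh P_n)$ from $\wh p_{n+1}$ with the liftings of the cells of the pulling triangulation of $P$; those cells are $\{n\}\cup F$ with $F$ a facet of $\conv(P)$ missing $p_n$. Hence the cells of the placing triangulation of $\wh P_{n+1}$ through $n+1$ are exactly the simplices $\{n,n+1\}\cup F$ for such $F$, and an $(r+2)$-subset $\{n+1,k\}\cup R$ sits inside one of them if and only if $(\{k\}\cup R)\setminus\{n\}$ is contained in some facet of $\conv(P)$ missing $p_n$. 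For $k=n$ this reduces to the claim that every $r$-subset $R\subseteq [n-1]$ lies in a facet of $\conv(P)$ not containing $p_n$; the link of $R$ in $\partial\conv(P)$ is an $(r-1)$-sphere, and the antistar of $p_n$ inside it is an $(r-1)$-ball whose top-dimensional faces are precisely the desired facets of $\conv(P)$. This antistar is always nondegenerate because no vertex of a simplicial sphere can be a cone point, which settles the sufficiency.

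For $k\in[n-1]$ the plan is to exhibit an explicit non-face. It suffices to find an $(r+1)$-subset $S\subseteq[n-1]$ containing $k$ which is not a face of $\conv(P)$: the set $\{n+1\}\cup S$ is then an $(r+2)$-subset through $\{\wwh p_{n+1},\wwh p_k\}$ avoiding $\wwh p_{n+2}$ that cannot sit inside any cell $\{n,n+1\}\cup F$ (since $n\notin S$ would force $S\subseteq F$, contradicting $S$ being a non-face). To produce $S$ I will pass to the deletion $Q:=\conv(P\setminus\{p_n\})$, still an $r$-neighborly simplicial $2r$-polytope on $n-1\ge d+2$ vertices; because any face of $\conv(P)$ avoiding $p_n$ remains a face of $Q$, a non-face of $Q$ missing $p_n$ is a non-face of $\conv(P)$. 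The vertex figure $Q/p_k$ is a simplicial $(2r-2)$-sphere on $n-2\ge 2r+1$ vertices, and in particular is not the boundary of a $(2r-1)$-simplex; hence it cannot be $r$-neighborly, since an $r$-neighborly $(2r-2)$-sphere has at most $2r$ vertices. Consequently some $r$-subset of its vertex set fails to be a face, which lifts to the desired $(r+1)$-subset $S$. The main obstacle is exactly this last step: the $r$-neighborliness of $P$ alone does not provide a non-face through a prescribed vertex $p_k$ avoiding a second prescribed vertex $p_n$, and the passage to $Q$ together with the sphere-rigidity of highly neighborly $(2r-2)$-spheres is what makes the step work.
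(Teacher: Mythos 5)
Your argument is correct, but it follows a genuinely different route from the paper's in both halves. For $k=n$, after disposing of the subsets containing $\wwh p_{n+2}$ as you do, the paper argues directly that $S\cup\{n\}$ is a face of the pulling triangulation of $P$: by definition that triangulation contains $F\cup\{n\}$ for every proper face $F$ of $\conv(P)$, and $S$ is such a face by $r$-neighborliness. Your passage through the cells $\{n,n+1\}\cup F$ of the placing triangulation of $\wh P$ and the link/antistar argument is sound, but it re-derives something already built into the definition of the pulling triangulation. For $k\in[n-1]$, the paper applies Radon's theorem to a $(d+2)$-subset $W\subseteq[n]$ with $k\in W$ and $n\notin W$: $r$-neighborliness plus even dimension force both Radon halves to have exactly $r+1$ elements, and the half $T\ni k$ is a non-face of $\conv(P)$ avoiding $p_n$, whence $T\cup\{n+1\}$ is not a face of $\conv(\wwh P)$. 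You instead delete $p_n$ to get $Q$, pass to the vertex figure $Q/p_k$, and invoke the rigidity that an $r$-neighborly polytopal $(2r-2)$-sphere has at most $2r$ vertices; since $Q/p_k$ has $n-2>2r$ vertices, an $r$-non-face exists and lifts to the required $S$. Both arguments use the evenness of $d$ at the crucial step, packaged differently: the paper via the parity of the Radon partition, you via the neighborliness threshold for the $(2r-1)$-dimensional vertex figure. Your route is a valid alternative given the ``too-neighborly implies simplex'' fact, which the paper itself uses; one small point worth making explicit is why $Q$ is simplicial (you use this when forming $Q/p_k$): it follows by applying the same rigidity to each facet of $Q$, a $(2r-1)$-polytope inheriting $r$-neighborliness.
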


\begin{proof}
For $r\geq 1$, we know that $\wwh P$ is $2$-neighborly, so all pairs $\{n+1, k\}$ for $k\in [n]$ give edges of $\wwh P$, and all points of the configuration $\wwh P/\{\wwh p_{n+1}\}$ are vertices. This justifies that the double contraction $\wwh{P}/\{\wwh p_{n+1}, \wwh p_k\}$ is well-defined. 
If $d=r=0$, $\wwh P/\{\wwh p_{n+1}\}$ is a $1$-dimensional configuration of points ordered linearly $n, n-1, \ldots, 2, 1, n+2$. Thus, the double contraction is well-defind only for $k=n+2$ and $k=n$ and we already have the result of the lemma.

Note that $P$ is a realization of $\wwh P/\{\wwh p_{n+2}, \wwh p_{n+1}\}$.
It follows from the definition of contraction that a set $S\subseteq [n]\setminus\{k\}$ gives a face of $\conv(\wwh{P}/\{\wwh p_{n+1}, \wwh p_k\})$ if and only if $S\cup\{n+1, k\}$ gives a face of $\conv(\wwh P)$. 

We denote $\wh P_i:= (\wh p_1, \ldots, \wh p_i)$ for $i\in [n]$.

We first show that $\wwh{P}/\{\wwh{p}_{n+1}, \wwh{p}_{n}\}$ is $r$-neighborly. 
Let $S\subseteq [n+2]\setminus\{n+1, n\}$ be a subset of cardinality~$r$. 
If $S$ contains $n+2$, we define $S':=S\cup\{n\}\setminus\{n+2\}$. $S'$ is a subset of $[n]$ of cardinality $r$, hence it defines a face of $\conv(P)$ and $S\cup\{n+1, n\}=S'\cup\{n+1, n+2\}$ indeed defines a face of $\conv(\wwh P)$. 
If $S$ does not contain $n+2$, then it is a subset of $[n]$ of cardinality $r$ and hence it gives a face of $\conv(P)$.    
Thus, $S\cup\{n\}$ gives a face of the pulling triangulation of $P$, and by Lemma~\ref{lem:faces_lexlifting}\ref{it:lexlifting_faces_visible} a face of $\conv(\wh P_n)$ that is visible from $\wh p_{n+1}$.
Therefore, $S\cup\{n, n+1\}$ gives a face of the placing triangulation of $\conv(\wh P)$, thus a face of $\conv(\wwh P)$.

Now, let $k$  be an element of $[n-1]$. To show that $\wwh P/\{\wwh p_{n+1}, \wwh p_k\}$ is not $r$-neighborly, we will exhibit a subset $S\subseteq [n]\setminus\{n+1, k\}$ of cardinality $r$ such that $S\cup\{n+1, k\}$ does not give a face of $\conv(\wwh P)$. 
Since $n > d+2$, we can find a subset $W$ of $[n]$ of cardinality $d+2=2(r+1)$ that contains $k$ but not $n$. 
Radon's theorem implies that there is a partition of $W$ into two subsets $W_1$ and $W_2$ such that $\conv(\set{p_i}{i\in W_1})\cap\conv(\set{p_j}{j\in W_2})\neq \emptyset$. 
In particular, $W_1$ and $W_2$ do not give faces of $\conv(P)$. 
Since $P$ is $r$-neighborly, $W_1$ and $W_2$ necessarily have at least $r+1$ elements, so they are both exactly of cardinality $r+1$. 
(This is where the assumption of even dimension is used). 
We define $T$ to be the $W_i$ that contains $k$, and $S:=T\setminus\{k\}$.
Since $T$ does not give a face of $\conv(P)$ and does not contain $n$, it does not give a face of $\wh P_n$ that is visible from $\wh p_{n+1}$. 
Hence, $T\cup\{n+1\}=S\cup\{n+1, k\}$ does not give a face of the placing triangulation of $\wh P$. 
However, all faces of $\wwh P$ not containing $n+2$ must be faces of the placing triangulation of $\wh P$ by Lemma~\ref{lem:faces_lexlifting}\ref{it:lexlifting_faces_q}.
Thus, $S\cup\{n+1, k\}$ does not give a face of $\wwh P$.
\end{proof}

\begin{corollary}[{\cite[Proposition 6.1]{Pad13} and \cite[Lemma 6.1]{GP16}}]\label{cor:typecontrolled}
Let $P=(p_1, \dots, p_{n})$ be an $r$-neighborly point configuration in even dimension $d=2r$. 
Then there are at least $\frac{n!}{(d+2)!}$ distinct labeled combinatorial types of $(d+2)$-polytopes 
with $n+2$ vertices obtained by the following construction:

\begin{itemize}
\item Choose a permutation $\sigma$ of $n$.
\item Define the point configuration $P^\sigma=(p_{\sigma(1)}, \ldots, p_{\sigma(n)})$.
\item Let $\wh P^\sigma$ be a positive lexicographic lifting of $P^\sigma$ and let $\wwh P^\sigma=(\wwh p_{\sigma(1)}, \ldots, \wwh p_{\sigma(n)}, \wwh p_{n+1}, \wwh p_{n+2})$ be a positive lexicographic lifting of $\wh P^\sigma$.
\item Define $\wwh P =(\wwh p_1, \ldots, \wwh p_n, \wwh p_{n+1}, \wwh p_{n+2})$.
\item Take the convex hull $\conv(\wwh P)$.
\end{itemize}

\end{corollary}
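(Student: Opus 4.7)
The plan is to bound from above by $(d+2)!$ the number of permutations $\sigma$ yielding the same labeled combinatorial type via the construction, which immediately gives the lower bound $n!/(d+2)!$. Concretely, I claim that if two permutations $\sigma,\tau$ of $[n]$ produce combinatorially equivalent labeled polytopes, then $\sigma(i)=\tau(i)$ for every $i\ge d+3$. This leaves only the first $d+2$ positions free, accounting for at most $(d+2)!$ permutations per combinatorial type. We may assume $n>d+2$, as otherwise the bound is trivial.

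For the base case $i=n$, the relabeled configuration $P^\sigma$ is $r$-neighborly in even dimension $d=2r$ with $n>d+2$ points. Lemma~\ref{lem:typecontrolled} singles out $\sigma(n)$ as the unique label $\ell\in[n]$ such that the double contraction $\wwh P/\{\wwh p_{n+1},\wwh p_\ell\}$ is $r$-neighborly. Because this selection depends only on the labeled face lattice of $\conv(\wwh P)$, the polytope built from $\tau$ must single out the very same label, which by the lemma applied to $P^\tau$ equals $\tau(n)$. Hence $\sigma(n)=\tau(n)$.

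For the inductive step, assume $\sigma(i)=\tau(i)$ for every $i>k$ with $k\ge d+3$. Consider the sub-configuration of $\wwh P$ obtained by deleting the vertices labeled $\sigma(k+1),\ldots,\sigma(n)$. Conditions (i)--(iii) of Definition~\ref{def:lexicographiclifting} are preserved under restriction to an initial segment of the lifting order (a direct check), so this sub-configuration is a double positive lexicographic lifting of $P^\sigma_k:=(p_{\sigma(1)},\ldots,p_{\sigma(k)})$ in the order $\sigma(1),\ldots,\sigma(k)$. By Corollary~\ref{cor:combitypelift}, its labeled combinatorial type is determined by the oriented matroid of $P^\sigma_k$. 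Since $P^\sigma_k$ inherits $r$-neighborliness from $P^\sigma$ and still satisfies $k>d+2$, a second application of Lemma~\ref{lem:typecontrolled} identifies $\sigma(k)$ as the unique label in $\{\sigma(1),\ldots,\sigma(k)\}$ whose double contraction with $n+1$ is $r$-neighborly. The deleted labels coincide for $\sigma$ and $\tau$ by the inductive hypothesis, so the restricted polytopes have identical labeled face lattices, forcing $\sigma(k)=\tau(k)$.

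The main obstacle I anticipate is the verification that restricting the double lex lifting to an initial segment of the lifting order genuinely yields a double lex lifting of the restricted point configuration; this reduces to unpacking Definition~\ref{def:lexicographiclifting} and checking condition (iii) on a subfamily of the original hyperplanes, but needs to be spelled out carefully to invoke Corollary~\ref{cor:combitypelift}. Once that compatibility is established, the rest is a routine inductive peeling argument combining Lemma~\ref{lem:typecontrolled} with Corollary~\ref{cor:combitypelift}.
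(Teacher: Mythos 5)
Your proposal is correct and follows essentially the same route as the paper's proof: an inductive peeling argument that recovers $\sigma(n),\sigma(n-1),\ldots,\sigma(d+3)$ from the labeled face lattice of $\conv(\wwh P)$, at each step deleting the already-identified top labels, invoking Corollary~\ref{cor:combitypelift} to see that the resulting sub-configuration is (combinatorially) a double positive lexicographic lifting of an initial segment of $P^\sigma$, and then applying Lemma~\ref{lem:typecontrolled} to identify the next label. The one place where you go beyond the paper's exposition is in explicitly verifying that conditions (i)--(iii) of Definition~\ref{def:lexicographiclifting} are stable under restriction to an initial segment of the lifting order; the paper takes this for granted when it asserts that $\wwh P_{m-1}$ ``is obtained as the relabeling of the double lifting of $(p_{\sigma(1)},\ldots,p_{\sigma(m-1)})$,'' so your extra care on that point is a small but genuine improvement in rigor rather than a different approach.
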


\begin{remark}
In fact, \cite[Lemma 6.1]{GP16} gives a bound improved by a factor $n+1$, but this does not change the asymptotics of the bound on the total number of polytopes.
\end{remark}

\begin{proof}
Let $\wwh P=(\wwh p_1, \ldots, \wwh p_n, \wwh p_{n+1}, \wwh p_{n+2})$ be a point configuration in $\RR^{d+2}$ obtained as in the statement, with a permutation $\sigma$ that we do not know. 
We will show that we can recover $\sigma(n), \sigma(n-1), \ldots, \sigma(d+3)$ from $P$ and the face lattice of $\conv(\wwh P)$. This implies that distinct choices for $\sigma(n), \sigma(n-1), \ldots, \sigma(d+3)$ give distinct labeled combinatorial types $\conv(\wwh P)$,
and there are $\frac{n!}{(d+2)!}$ such choices. 

We will consecutively recover the values of $\sigma(m)$ starting from $m=n$ until $m=d+3$. Suppose that we have already recovered $\sigma(n), \sigma(n-1), \ldots, \sigma(m+1)$ for some $d+3 \leq m \leq n$. We consider the point configuration $\wwh P_{m} := \wwh P \setminus\{\wwh p_{\sigma(n)}, \ldots, \wwh p_{\sigma(m+1)}\}$ (where we abuse notation for the labels but the only important thing is to record the last two points). 
It follows from Corollary \ref{cor:combitypelift} that its combinatorial type is well defined, because it is obtained as the relabeling of the double lifting of the point configuration $(p_{\sigma(1)}, \ldots, p_{\sigma(m)})$ (with the two additional points $\wwh p_{n+1}$ and $\wwh p_{n+2}$).  
Moreover, since the point configuration $(p_{\sigma(1)}, \ldots, p_{\sigma(m)})$ is $r$-neighborly, it follows from Lemma~\ref{lem:typecontrolled} that we can recover $\sigma(m)$ as the only index $k\in [m]$ such that $\wwh P_{m}/\{\wwh p_{n+1}, \wwh p_k\}$ is $r$-neighborly. 
\end{proof}

\subsection{Construction of many polytopes}
\label{sub:manypolytopes}

We will use the following slight variation of the construction used in~\cite{Pad13} to give a lower bound for the number of polytopes.
\begin{theorem}[{\cite[Theorem 6.8]{Pad13}}]\label{thm:manyneighborly}
 The number of labeled combinatorial types of neighborly $d$-polytopes with $n>d$ vertices obtained from a $0$-dimensional point configuration by a sequence of positive lexicographic liftings (with orders that might change along each step of the sequence) is at least 
 \[(n!)^{\ffloor{d}{2}\pm o(1)}.\]
\end{theorem}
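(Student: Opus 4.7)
The plan is to iterate the double-lift construction of Corollary~\ref{cor:typecontrolled} exactly $\floor{d/2}$ times starting from a $0$-dimensional configuration, and to append one extra single positive lexicographic lifting when $d$ is odd. Concretely, I would start from a (trivially $0$-neighborly) $0$-dimensional configuration $P^{(0)}$ consisting of $n_0:=n-d$ copies of the origin. At stage $i\in\{1,\dots,\floor{d/2}\}$, I would permute the labels of the current $(i-1)$-neighborly configuration $P^{(i-1)}$ (of $n_0+2(i-1)$ points in dimension $2(i-1)$) by some permutation $\sigma_i$ of $[n_0+2(i-1)]$, and then apply two consecutive positive lexicographic liftings with respect to the resulting order, as prescribed by Corollary~\ref{cor:typecontrolled}. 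Theorem~\ref{thm:neighborlylift} guarantees that neighborliness is incremented by one at each double lift, so after $\floor{d/2}$ stages the configuration $P^{(\floor{d/2})}$ is $\floor{d/2}$-neighborly of dimension $2\floor{d/2}$ with $n-(d\bmod 2)$ labeled points. For even $d$ this is the desired neighborly $d$-polytope with $n$ vertices; for odd $d$ a single further positive lexicographic lifting preserves the current neighborliness (by the first part of the proof of Theorem~\ref{thm:neighborlylift}) and yields a neighborly $d$-polytope with $n$ vertices.

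For the lower bound on the number of labeled combinatorial types, I would combine the per-stage counts of Corollary~\ref{cor:typecontrolled}: at stage $i$ it yields at least $\frac{(n_0+2(i-1))!}{(2i)!}$ distinct labeled combinatorial types of $P^{(i)}$ as $\sigma_i$ varies. To turn these into a multiplicative total, I would show that distinct sequences $(\sigma_1,\dots,\sigma_{\floor{d/2}})$ produce distinct final labeled combinatorial types by iterating the reconstruction argument from the proof of Corollary~\ref{cor:typecontrolled}. Starting from the combinatorial type of $\conv(P^{(\floor{d/2})})$, Lemma~\ref{lem:typecontrolled} identifies $\sigma_{\floor{d/2}}$ up to the ambiguity present in Corollary~\ref{cor:typecontrolled}, which in turn recovers the combinatorial type of $P^{(\floor{d/2}-1)}$ via Corollary~\ref{cor:combitypelift}; recursing through the stages recovers each $\sigma_i$ up to the same ambiguity, so the distinguishable portion of the sequence is combinatorially determined.

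The main obstacle is precisely to make this recursive reconstruction airtight: when we ``peel off'' later stages combinatorially to recover $P^{(i-1)}$ from $P^{(i)}$, we need the combinatorial type of the latter to depend only on the oriented matroid of the former, so that Lemma~\ref{lem:typecontrolled} keeps applying. This is exactly the content of Corollary~\ref{cor:combitypelift}. The hypothesis $n_0+2(i-1)>2(i-1)+2$ needed in Lemma~\ref{lem:typecontrolled} holds whenever $n_0\geq 3$; the finitely many remaining small values of $n$ are absorbed by the $\pm o(1)$ in the exponent. With the reconstruction in hand, the total count is at least
\[
\prod_{i=1}^{\floor{d/2}}\frac{(n_0+2(i-1))!}{(2i)!}\ \geq\ \frac{\bigl((n-d)!\bigr)^{\floor{d/2}}}{(d!)^{\floor{d/2}}}\ \geq\ \frac{(n!)^{\floor{d/2}}}{n^{d\floor{d/2}}\,(d!)^{\floor{d/2}}},
\]
whose logarithm equals $\floor{d/2}\log(n!)\bigl(1-o(1)\bigr)$ for fixed $d$ and $n\to\infty$, yielding the desired lower bound $(n!)^{\floor{d/2}\pm o(1)}$.
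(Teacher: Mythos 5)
Your proposal follows essentially the same iterative double-lifting scheme as the paper's proof: start from $n-d$ coincident points, apply $\ffloor{d}{2}$ rounds of double positive lexicographic liftings via Corollary~\ref{cor:typecontrolled}, use Theorem~\ref{thm:neighborlylift} to propagate neighborliness, and distinguish the outcomes via Lemma~\ref{lem:typecontrolled}. Two small points are worth making explicit. First, the recovery of $\conv(P^{(i-1)})$ from $\conv(P^{(i)})$ is simply the double contraction $P^{(i)}/\{\wwh p_{n'},\wwh p_{n'-1}\}$ (a face-lattice operation); Corollary~\ref{cor:combitypelift} goes in the \emph{forward} direction and is really used \emph{inside} the proof of Corollary~\ref{cor:typecontrolled} to guarantee that the intermediate restrictions $\wwh P_{m-1}$ have well-defined combinatorial types. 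The paper phrases your recursive reconstruction equivalently as disjointness of the union $\cP_{2k+2}=\bigcup_{P\in\cP_{2k}}\{\text{relabelled double liftings of }P\}$, which avoids any appearance of needing to recover the oriented matroid (rather than only the face lattice) of $P^{(i-1)}$. Second, in the odd-$d$ case you still owe an argument that the final single lifting does not identify any of the distinct $(d-1)$-dimensional types already produced; the paper supplies this by noting that each $(d-1)$-polytope is recovered as the vertex figure of the last labeled point of its lifting. Neither issue is deep, but your write-up leaves them implicit.
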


\begin{proof}
We build iteratively sets $\cP_{2k}$ that contain realizations of distinct labeled combinatorial types of neighborly polytopes of dimension $2k$ with $n-d+2k$ vertices. 

We define $\cP_0$ to be the singleton with the degenerate configuration of $n-d$ labeled points in the $0$- dimensional space. 

Suppose that we have constructed $\cP_{2k}$ for some $0\leq k < \ffloor{d}{2}$. 
Let $\cP_{2k+2}$ be the union over all configurations $P\in \cP_{2k}$ of the distinct labeled point configurations obtained from $P$ by relabelings and two positive lexicographic liftings in the same order, as in Corollary~\ref{cor:typecontrolled}. This union is disjoint because if $\wwh P$ is a double lifting of $P$, we can recover the combinatorial type of $P$ by taking $\wwh P/\{\wwh p_{n-d+2k+2}, \wwh p_{n-d+2k+1}\}$. Hence, Corollary~\ref{cor:typecontrolled} gives that $|\cP_{2k+2}|\geq |\cP_{2k}|\times \frac{(n-d+2k)!}{(2k+2)!}$. 
Theorem~\ref{thm:neighborlylift} ensures that the point configurations in $\cP_{2k+2}$ are neighborly.

For $k=\ffloor{d}{2}$ we obtain that:
\begin{align*}
|\cP_{2\ffloor{d}{2}}| & \geq 
\prod_{k=0}^{\ffloor{d}{2}-1} \frac{(n-d+2k)!}{(2k+2)!}\\
&\geq \frac{\left( (n-d)! \right)^{\ffloor{d}{2}}}{\prod_{k=1}^{\ffloor{d}{2}} (2k)!}\\
&=(n!)^{\ffloor{d}{2} + o(1)}.
\end{align*}

If $d$ is odd, instead of taking a pyramid as in \cite[Corollary~6.10]{Pad13}, we do one last positive lexicographic lifting on all the elements of $\cP_{2\ffloor{d}{2}}$ to obtain $(n!)^{\ffloor{d}{2} + o(1)}$ realizations of distinct labeled combinatorial types of $d$-polytopes with $n$ vertices. 
This variant still conserves the number of distinct combinatorial types since we recover the polytopes in $\cP_{2\ffloor{d}{2}}$ by taking the contractions of the last labeled point.
\end{proof}

The combination of these constructions allows us to prove Theorem~\ref{thm:manypolytopes}: The number of different labeled combinatorial types of $d$-polytopes with $n$ vertices for fixed $d>3$ and $n$ growing to infinity is at least $(n!)^{d-2 \pm o(1)}$.

\begin{proof}[Proof of Theorem~\ref{thm:manypolytopes}]
We start by applying Theorem~\ref{thm:manyneighborly}  in dimension $d-1$.
The last step of the construction of the many $(d-1)$-polytopes in that theorem is a positive lexicographic lifting $\wh \pc = (\wh p_1, \ldots, \wh p_{n-1}, \wh q)$ from a $\floor{\frac{d-2}{2}}$-neighborly $(d-2)$-polytope $P$. 

Lemma~\ref{lem:triang_insep} ensures that we can do this lifting step by step so that for every $i$ from $d$ to $n-1$, $\wh p_{i}$ and $\wh q$ are triangulation-inseparable in $(\wh p_1, \ldots, \wh p_i, \wh q)$. Indeed, the value of $\varepsilon_i$ in Remark~\ref{rmk:epsilon_lex_lift} can be taken arbitrarily small. While very general position is not guaranteed by the construction, note that these configurations are in general position, and hence we can do a small perturbation into very general position if needed without changing the combinatorial type. 

Moreover, note that by construction $P$ is the contraction $\wh \pc /\wh q$, and that similarly $( p_1, \ldots, p_i) = (\wh p_1, \ldots, \wh p_i, \wh q)/\wh q$. These contractions are thus $\floor{\frac{d-2}{2}}$-neighborly.

Hence Theorem~\ref{thm:induction_regtriang} applies: each of these polytopes has at least $(n!)^{\ffloor{d-2}{2}n\pm o(1)}$ regular triangulations. Then Lemma~\ref{lem:triangs_to_polytopes} gives us a lower bound of $(n!)^{d-2\pm o(1)}$ labeled simplicial types of $d$-polytopes with $n$ vertices.
\end{proof}

\begin{remark}\label{rmk:notmanyneighborly}
It follows from the construction that all these many $d$-polytopes are $\ffloor{d-1}{2}$-neighborly, because they come from regular triangulations of Padrol's neighborly $(d-1)$-polytopes.

Hence, for odd $d$ our polytopes are neighborly, since in this case $\ffloor{d}{2}=\ffloor{d-1}{2}$.

On the other hand, if $d$ is even then the following lemma shows that we do not improve Padrol's bound on the number of neighborly polytopes, because each of the Padrol polytopes that we use has at most one neighborly triangulation.
\end{remark}

\begin{lemma}\label{lem:neighborlytriang}
A polytope in odd dimension $2k+1$ has at most one triangulation that is $(k+1)$-neighborly. 
\end{lemma}

\begin{proof}
This is a direct consequence of the observation after \cite[Lemma 3.1]{Dey93}, see also \cite[Lemma 8.4.1]{DRS10}: a triangulation of a $d$-polytope is completely determined by its $\ffloor{d}{2}$-skeleton. 
For a triangulation of a $(2k+1)$-polytope, being $(k+1)$-neighborly exactly means that its $k$-skeleton is  complete.
\end{proof}

\bibliographystyle{plain}
\bibliography{biblio_manytriang} 

\end{document}